\documentclass[11pt,twoside,a4paper]{article}
\usepackage[english]{babel}   
\selectlanguage{english}
\usepackage{amsmath}
\usepackage{amssymb}
\usepackage{amsthm}
\usepackage{enumerate}
\usepackage{eucal}
\usepackage{rotating}
\usepackage{psfrag}
\usepackage{epsfig}
\usepackage{makeidx}
\usepackage{bbm}
\usepackage{framed}
\usepackage{graphicx}

\theoremstyle{definition}
\newtheorem{definition}{Definition}[section]
\newtheorem{theorem}[definition]{Theorem}
\newtheorem{proposition}[definition]{Proposition}
\newtheorem{lemma}[definition]{Lemma}
\newtheorem{corollary}[definition]{Corollary}
\newtheorem*{acknowledgement*}{Acknowledgement}
\theoremstyle{remark}
\newtheorem{remark}[definition]{Remark}
\newtheorem{example}[definition]{Example}

\newcommand{\N}{\ensuremath{\mathbb{N}}}

\newcommand{\hide}[1]{}

\addtolength{\evensidemargin}{-2cm}
\addtolength{\textwidth}{2cm}
\parindent0pt
\setlength{\parskip}{1ex}

\makeindex


\title{Disjointness preserving $\mathrm{C}_0$-semigroups and local operators on ordered Banach spaces}
\author{Anke Kalauch, Onno van Gaans and Feng Zhang}

\date{}

\begin{document}

\maketitle

\abstract{We generalize results concerning $\mathrm{C}_0$-semigroups on Banach lattices to a setting of ordered Banach spaces.  We prove that the generator of a disjointness preserving $\mathrm{C}_0$-semigroup is local. Some basic properties of local operators are also given.
We investigate cases where local operators generate local $\mathrm{C}_0$-semigroups, by using Taylor series or Yosida approximations. As norms we consider regular norms and show that bands are closed with respect to such norms. Our proofs rely on the theory of embedding pre-Riesz spaces in vector lattices and on corresponding extensions of regular norms.}

\textbf{Keywords:} $\mathrm{C}_0$-semigroup; disjointness preserving operator; local operator; ordered Banach space; pre-Riesz space; regular norm

\textbf{AMS subject classification:} 46B40, 47B60, 47D06

\section{Introduction}

In the theory of $\mathrm{C}_0$-semigroups, many results involve the order structure of the underlying Banach space. For instance, a rich theory of disjointness preserving semigroups and semigroups
with local generators has been developed in \cite{Are1986C, Are1986B}. The Banach spaces in these works are Banach lattices. However, there are many Banach spaces that have a natural partial order but that are not vector lattices. In the present paper, we investigate how results on disjointness
preserving $\mathrm{C}_0$-semigroups on Banach lattices can be generalized to the more general setting of ordered Banach spaces. 

Ordered Banach spaces are covered by the theory of pre-Riesz spaces. Pre-Riesz spaces are those partially ordered vector spaces that have a suitable vector lattice completion, see Section \ref{sec.preli}. A notion of disjointness in pre-Riesz spaces has been developed in \cite{KalGaa2006}, which yields a corresponding notion of disjointness preserving operator. Below we use disjointness to define local operators on pre-Riesz spaces in the spirit of Arendt \cite{Are1986C}. 

Our main result is that the generator of a disjointness preserving $\mathrm{C}_0$-semigroup on a suitable ordered Banach space is local, as in the Banach lattice case.  It turns out that the choice of norms is the main issue of the analysis. We consider semimonotone norms for which the cone of positive elements is closed. On Banach spaces, those norms are equivalent to regular norms, which are a natural generalization of lattice norms. Properties of the norms are discussed in Section \ref{npovs}.

Section \ref{lo} contains a discussion of local operators in different settings and some of their basic properties. In Section \ref{dpcs} we present our main result.
Moreover, we establish two results on local operators generating disjointness preserving $\mathrm{C}_0$-semigroups. The first one considers a bounded generator and uses Taylor series, the second one uses resolvent operators and Yosida approximations.

\section{Preliminaries}\label{sec.preli}

In this section we list the terminologies on ordered
vector spaces and vector lattices. Let $X$ be a real vector 
space and let $K$ be a \emph{cone} in $X$, that is $K$ is a  \emph{wedge}
($x,y\in K$, $\lambda,\mu\ge 0$ imply $\lambda x+\mu y\in K$)
and $K\cap(-K)=\{0\}$. In $X$ a partial order is introduced by 
defining $x\le y$ if and only if $y-x\in K$; we write $(X, K)$ for a partially ordered vector space. We call  $(X, K)$ \emph{Archimedean} if for 
every $x, y\in X$ with $nx\le y$ for all $n \in \mathbb{N}$ one has that $x\le 0$. We say that $(X,K)$ is \textit{directed} or $K$ is \textit{generating} in $X$ if $X=K-K$. Denote for a set $M\subseteq X$ the set of all upper bounds of $M$ by
 \[M^\mathrm{u}=\{x\in X\colon x\ge m \mbox{ for all } m\in M\}\]
and the set of all lower bounds by $M^\mathrm{l}$.
 For standard notations in the case that $(X,K)$ is a vector lattice see \cite{AliBur1985}.

By a subspace of a partially ordered vector space or a vector lattice we mean
an arbitrary linear subspace with the inherited order. We do not require it to be 
a lattice or a sublattice. Recall that a subspace $D$ of a partially ordered vector space $Y$ is \emph{majorizing} if for every $y\in Y$ there is 
  $d\in D$ such that $d\ge y$. We call a subspace $D$ of a partially ordered vector space 
$Y$ \emph{order dense} in $Y$ if every $y\in Y$ is the greatest lower bounded of 
 the set $\{d\in D\colon y\le d\}$, i.e. 
 \[y=\inf \{d\in D\colon y\le d\}.\]
Every order dense subspace of a partially ordered vector space is majorizing.

We continue by a notion which is closely related to the order dense embedding of 
a partially ordered vector space into a vector lattice.  A partially order vector space $X$ is called a \emph{pre-Riesz space}
if for every $x,y,z\in X$ the inclusion $\{x+y,x+z\}^\mathrm{u}\subseteq\{y,z\}^\mathrm{u}$ implies 
$x\in K$ \cite[Definition 1.1(viii), Theorem 4.15]{Haa1993}. Every pre-Riesz space is directed
and every directed Archimedean partially ordered vector space is a pre-Riesz space \cite{Haa1993}.
 Clearly, each vector lattice is a pre-Riesz space. Many examples of pre-Riesz spaces that are not vector lattices are given in \cite{KalLemGaa2014, KalGaa2008a}.

 Recall that a linear map $i\colon X\rightarrow Y$, where $X$ and $Y$ are partially ordered vector spaces, is called \emph{bipositive} if for every $x\in X$ one has $0\le x$
 if and only if $0\le i(x)$. An embedding map is required to be linear and bipositive, 
 which implies injectivity. For sets $L\subseteq X$ and $M\subseteq Y$ we denote 
 $i[L]\colon=\{i(x);\, x\in L\}$ and  $i^{-1}[M]\colon=\{x\in X;\, i(x)\in M\}$. 

\begin{theorem}\cite[Corollaries 4.9-11 and Theorems 3.5, 3.7, 4.13]{Haa1993}\label{embd-preR}
Let $X$ be a partially ordered vector space. The following statements are equivalent.
\begin{itemize}
\item[(i)] $X$ is a pre-Riesz space.
\item[(ii)] There exist a vector lattice $Y$ and a bipositive linear map $i\colon X\rightarrow Y$ such that $i[X]$ is order dense in $Y$.
\item[(iii)] There exist a vector lattice $Y$ and a bipositive linear map $i\colon X\rightarrow Y$ such that $i[X]$ is order dense in $Y$ and $i[X]$ generates $Y$ as a vector lattice, i.e. for every $y\in Y$ there are $a_1, \cdots a_m, b_1,\cdots, b_n\in i[X]$ such that 
\[y=\bigvee_{i=1}^{m}a_i-\bigvee_{i=1}^{n}b_i.\]
\end{itemize}
\end{theorem}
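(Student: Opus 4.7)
The plan is to prove the cycle (iii)$\Rightarrow$(ii)$\Rightarrow$(i)$\Rightarrow$(iii). The implication (iii)$\Rightarrow$(ii) is immediate, since (iii) simply adds to (ii) the requirement that $i[X]$ generates $Y$ as a vector lattice.

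For (ii)$\Rightarrow$(i), the idea is that the pre-Riesz property is automatic in any vector lattice and transfers back across an order-dense, bipositive embedding. Suppose $i\colon X\rightarrow Y$ is such an embedding and $x,y,z\in X$ satisfy $\{x+y,x+z\}^\mathrm{u}\subseteq\{y,z\}^\mathrm{u}$ in $X$. Consider $w:=i(x+y)\vee i(x+z)=i(x)+i(y)\vee i(z)\in Y$. For any $d\in X$ with $i(d)\ge w$, bipositivity yields $d\ge x+y$ and $d\ge x+z$, so the hypothesis forces $d\in\{y,z\}^\mathrm{u}$, hence $i(d)\ge i(y)\vee i(z)$. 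Order density of $i[X]$ in $Y$ gives $w=\inf\{i(d)\colon d\in X,\, i(d)\ge w\}\ge i(y)\vee i(z)$, so $i(x)\ge 0$ and bipositivity delivers $x\in K$.

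The heart of the theorem is (i)$\Rightarrow$(iii), which requires actually constructing a vector lattice cover, and this is where I expect the main difficulty. My plan has two stages. First, realise $X$ as a bipositive subspace of some vector lattice $Z$; a natural candidate is obtained by separating points of $X$ with enough positive functionals or lattice homomorphisms into $\R$ and embedding $X$ into a suitable subspace of $\R^{\Sigma}$, or alternatively by extending $X$ as a majorizing subspace into a Dedekind complete lattice. Second, set $Y$ to be the vector sublattice of $Z$ generated by $i[X]$; generation is then automatic and every element of $Y$ admits a canonical representation $\bigvee_i a_i - \bigvee_j b_j$ with $a_i,b_j\in i[X]$.

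The genuine obstacle is to verify that $i[X]$ is order dense in $Y$, rather than just majorizing, and this is the step where the pre-Riesz axiom is indispensable. Concretely, for each canonical element $\bigvee_i a_i-\bigvee_j b_j\in Y$ one must show that the infimum of the upper bounds from $i[X]$ coincides with that element. I would proceed by induction on the number of joins, reducing the density check for a generic element to finitely many instances of the implication ``$\{x+y,x+z\}^\mathrm{u}\subseteq\{y,z\}^\mathrm{u}\Rightarrow x\in K$'' applied inside $X$. This combinatorial bookkeeping on suprema is the technical core of the construction carried out in \cite{Haa1993}, and is the place where most of the real work lies.
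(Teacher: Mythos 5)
This theorem is imported by the paper from van Haandel's work \cite[Corollaries 4.9--11, Theorems 3.5, 3.7, 4.13]{Haa1993}; the paper gives no proof of its own, so there is nothing in the text to compare your argument against line by line. On its own merits, your treatment of (iii)$\Rightarrow$(ii) is trivially correct, and your proof of (ii)$\Rightarrow$(i) is complete and correct: the identity $i(x+y)\vee i(x+z)=i(x)+i(y)\vee i(z)$, the transfer of the upper-bound inclusion to $i[X]$ via bipositivity, and the use of order density to conclude $i(x)\ge 0$ is exactly the right mechanism.

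The gap is in (i)$\Rightarrow$(iii), which is where all the content of the theorem lives, and your sketch has two concrete problems. First, your proposed ``first stage'' --- embedding $X$ bipositively into some ambient lattice $Z$ by separating points with positive functionals, or by passing to a Dedekind completion --- is not available in the stated generality: a pre-Riesz space need not be Archimedean (the theorem only assumes $X$ is pre-Riesz, and directedness plus the Riesz decomposition-type axiom do not force Archimedeanness), so the Dedekind completion by cuts is not well behaved, and there is no guarantee of enough positive functionals to separate points. Van Haandel's construction is intrinsic: the Riesz completion is built from formal finite suprema of elements of $X$ modulo an equivalence defined via sets of upper bounds, precisely to avoid needing any ambient lattice. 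Second, even granting an ambient $Z$, the verification that $i[X]$ is order dense in the sublattice it generates --- which you correctly identify as the crux --- is only announced (``induction on the number of joins'') and then deferred to \cite{Haa1993}. Since that is the same citation the paper itself relies on, your proposal does not actually supply the missing argument; as written it proves two of the three implications and leaves the essential one to the literature.
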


A pair $(Y, i)$ as in (ii) is called a \emph{vector lattice cover} of $X$, a pair 
$(Y,i)$ as in (iii) is called a \emph{Riesz completion} of $X$. Since all spaces $Y$ as 
in (iii) are isomorphic as vector lattices \cite[Remark 3.2]{Haa1993}, we will speak of  \emph{the} Riesz completion of 
$X$ and denote it by $(X^\rho, i)$. If $X$ is a pre-Riesz space and $(Y,i)$ is a vector lattice cover of $X$, then $X^\rho$ is the Riesz subspace of $Y$ generated by $i[X]$. Notice that the map $i\colon X\to Y$ in Theorem \ref{embd-preR}(ii) is even a \emph{Riesz* homomorphism}, which means that for every nonempty finite subset $M\subseteq X$ we have that
\[i[M^\mathrm{ul}]\subseteq i[M]^\mathrm{ul},\]
see \cite{Haa1993}.

Disjointness in a partially ordered vector space $(X,K)$ is introduced in \cite{KalGaa2006}. Two elements $x,y\in X$ are called \emph{disjoint}, in symbols $x\perp y$, if 
\[\{x+y,-x-y\}^\mathrm{u}=\{x-y,-x+y\}^\mathrm{u}.\]
If $X$ is a vector lattice, then this notion of disjointness coincides with the usual one
 \cite[Theorem 1.4(4)]{AliBur1985}. 
\begin{proposition}\cite[Proposition 2.1]{KalGaa2006}\label{homo-disj}
Let $X$ be a pre-Riesz space and $(Y,i)$ a vector lattice cover of $X$. Then for every $x,y\in X$ we have
\[
x\perp y\Leftrightarrow i(x)\perp i(y).
\]
\end{proposition}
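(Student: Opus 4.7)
The plan is to use two structural features of the embedding $i\colon X\to Y$: the Riesz* property $i[M^{\mathrm{ul}}]\subseteq i[M]^{\mathrm{ul}}$ for finite nonempty $M\subseteq X$, and the bipositivity of $i$. The useful reformulation on the lattice side is that $\{a,-a\}^{\mathrm{u}}=\{|a|\}^{\mathrm{u}}$ in $Y$, so $i(x)\perp i(y)$ is equivalent to $|i(x+y)|=|i(x-y)|$. The task therefore reduces to transporting the equality $\{x+y,-x-y\}^{\mathrm{u}}=\{x-y,-x+y\}^{\mathrm{u}}$ between $X$ and $Y$, using the Riesz* property in the forward direction and bipositivity in the backward direction.

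For ``$\Rightarrow$'', I would assume $x\perp y$. Since $x+y$ trivially lies in $\{x+y,-x-y\}^{\mathrm{ul}}$ and the latter equals $\{x-y,-x+y\}^{\mathrm{ul}}$ by hypothesis, the Riesz* property yields $i(x+y)\in\{i(x-y),-i(x-y)\}^{\mathrm{ul}}$, which in the vector lattice $Y$ is just the assertion $i(x+y)\le|i(x-y)|$. Repeating with $-x-y$ in place of $x+y$ gives $-i(x+y)\le|i(x-y)|$, hence $|i(x+y)|\le|i(x-y)|$. A symmetric computation with the roles of $x+y$ and $x-y$ exchanged produces the reverse inequality, so $|i(x+y)|=|i(x-y)|$, i.e.\ $i(x)\perp i(y)$.

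For ``$\Leftarrow$'', I would take any $m\in\{x+y,-x-y\}^{\mathrm{u}}$. Then $i(m)\ge\pm i(x+y)$, so $i(m)\ge|i(x+y)|=|i(x-y)|\ge\pm i(x-y)$. Bipositivity of $i$ then forces $m\ge x-y$ and $m\ge-x+y$, so $m\in\{x-y,-x+y\}^{\mathrm{u}}$; the reverse inclusion is symmetric, establishing $x\perp y$.

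The main obstacle is the forward direction, because $Y$ may contain upper bounds of $i(x+y)$ and $-i(x+y)$ that are not of the form $i(m)$ for any $m\in X$, so upper bounds cannot be transported naively. What saves the argument is precisely that the Riesz* condition preserves the iterated operation $\mathrm{ul}$ rather than $\mathrm{u}$ alone, combined with the lattice identity $\{a,-a\}^{\mathrm{ul}}=\{z\colon z\le|a|\}$ in $Y$; together these convert the pre-Riesz disjointness condition into a statement about absolute values in $Y$.
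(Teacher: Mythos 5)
Your argument is correct. Note that the paper itself does not prove this statement: it imports it from \cite[Proposition 2.1]{KalGaa2006}, so there is no in-paper proof to compare against. Your two directions both check out. For ``$\Leftarrow$'' you only need bipositivity, and in fact there is an even shorter route: disjointness restricts to subspaces, since $\{a+b,-a-b\}^{\mathrm{u}}\cap i[X]=\{a-b,-a+b\}^{\mathrm{u}}\cap i[X]$ follows immediately from equality of the upper-bound sets in $Y$; your explicit computation via $i(m)\ge|i(x+y)|=|i(x-y)|$ amounts to the same thing. For ``$\Rightarrow$'', your use of the Riesz* property $i[M^{\mathrm{ul}}]\subseteq i[M]^{\mathrm{ul}}$ with $M=\{x-y,-x+y\}$, together with the lattice identity $\{a,-a\}^{\mathrm{ul}}=\{z\colon z\le|a|\}$, is sound and cleanly converts the pre-Riesz condition into $|i(x+y)|=|i(x-y)|$, which is the usual disjointness in $Y$. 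The proof in the cited source works instead directly from order density: an arbitrary upper bound $v$ of $\{i(x)+i(y),-i(x)-i(y)\}$ in $Y$ is written as $v=\inf\{i(u)\colon u\in X,\ i(u)\ge v\}$, each such $u$ is pulled back to an upper bound in $X$, the hypothesis $x\perp y$ is applied there, and one passes back to the infimum. Your route delegates exactly this order-density argument to the already-stated fact that $i$ is a Riesz* homomorphism, which is a legitimate economy given that the paper records that fact in its preliminaries; the trade-off is that your proof is shorter but rests on a nontrivial imported property, while the direct proof is self-contained from the definition of a vector lattice cover.
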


 The \emph{disjoint complement} of a subset 
$M\subseteq X$ is the set $M^\mathrm{d}=\{y\in X;\, y\perp x \mbox{ for all } x\in M\}$. For a set $S\subseteq X$ Proposition \ref{homo-disj} implies
\begin{equation}
S^\mathrm{d}=i^{-1}[i[S]^\mathrm{d}].
\end{equation}
Thus, disjoint complements in pre-Riesz spaces have properties as in 
 the vector lattice setting, see \cite[Theorem 5.10]{KalGaa2008a}. In particular, a disjoint
 complement is a linear subspace in $X$. More than that, it is a band. Recall that a subspace $B$ of $X$ is a  \emph{band}
 in $X$ if $B=B^\mathrm{dd}$, and note that this notion coincides with the usual one provided $X$ is an Archimedean vector lattice. 


\hide{ 
\begin{theorem}\cite{KalGaa2008b}\label{perv-ext}
Let $X$ be a pre-Riesz space and $(Y,i)$ be its vector lattice 
cover. If $B$ is a band in $X$, then there exists a band $D$
in $Y$ such that $B=i^{-1}[D]$. If, moreover, $X$ is pervasive
and $D$ is band in $Y$, then $i^{-1}[D]$ is a band in $X$.
\end{theorem} 
 } 

\section{Normed partially ordered vector spaces}\label{npovs}

Let $(X,K)$ be a partially ordered vector space. A norm $\left\|\cdot\right\|$ on $X$ is called \emph{monotone} if for every $x,y\in X$ with $0\le x\le y$ one has $\|x\|\le \|y\|$, and \emph{semimonotone} if there exists a constant $M\in\mathbb{R}$ such that for every $x,y\in X$ with $0\le x\le y$ one has $\|x\|\le M\|y\|$. If $\left\|\cdot\right\|$ is a semimonotone norm on $X$, then there exists an equivalent monotone norm on $X$, see \cite[Theorems IV.2.1 and IV.2.4]{Vul1977}. If $X$ is directed, then we say that a seminorm $\left\|\cdot\right\|$ on $X$ is \emph{regular} if $\|x\|=\inf\{\|y\|\colon y\in X, -y\le x\le y\}$ for every $x\in X$. If $X$ is a vector lattice, then a seminorm $\left\|\cdot\right\|$ on $X$ is called a \emph{Riesz norm} if it is monotone and $\| |x| \|=\|x\|$ for every $x\in X$. On vector lattices, the notions of regular seminorms and Riesz seminorms coincide. 

\begin{lemma}\label{regular-extension}
Let $(X,K)$ be a directed partially ordered vector space and let $\left\|\cdot\right\|$ be a semimonotone norm on $X$. Let $Y$ be a directed partially ordered vector space and $i\colon X\to Y$ a bipositive linear map, such that $i[X]$ is majorizing in $Y$. For $y\in Y$ let
\begin{equation}\label{norm-ext}
\rho(y):=\inf\left\{\|x\|;\  x\in X, \,-i(x)\le y\le i(x)\right\}.
\end{equation}
The following statements hold.
\begin{itemize}
\item[(i)] $\rho$ is a regular seminorm on $Y$. 
\item[(ii)] If $\left\|\cdot\right\|$ is regular, then $\rho$ extends $\left\|\cdot\right\|$ in the sense that $\rho\circ i=\left\|\cdot\right\|$. 
\item[(iii)] If $Y=X$, $K$ is closed and $(X,\left\|\cdot\right\|)$ is complete, then $\rho\circ i$ is a regular norm on $X$ that is equivalent to $\left\|\cdot\right\|$.
\end{itemize}
\end{lemma}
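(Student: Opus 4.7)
The strategy is to establish (i), (ii), (iii) in that order, as each later part builds on the earlier.

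For (i), I first need to see that the infimum in \eqref{norm-ext} is taken over a nonempty set. Given $y\in Y$, the majorizing property of $i[X]$ produces $x_1,x_2\in X$ with $i(x_1)\ge y$, $i(x_2)\ge -y$; directedness of $X$ lets me pick $x\ge x_1,x_2$, and then $-i(x)\le y\le i(x)$. Note that any admissible $x$ must satisfy $x\ge 0$ by bipositivity, since $-i(x)\le i(x)$. The seminorm axioms now follow routinely: subadditivity by adding admissible elements for $y_1,y_2$; positive homogeneity by scaling; $\rho(0)=0$ by $x=0$. For regularity, I show both inequalities in $\rho(y)=\inf\{\rho(y')\colon -y'\le y\le y'\}$. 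For ``$\ge$'': any admissible $x$ yields $y':=i(x)\ge 0$ with $-y'\le y\le y'$, and $x$ itself is admissible for $y'$, so $\rho(y')\le\|x\|$. For ``$\le$'': any $y'\ge 0$ with $-y'\le y\le y'$ has admissible elements that also bound $\pm y$, so $\rho(y)\le\rho(y')$.

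For (ii), I use bipositivity directly: $-i(x')\le i(x)\le i(x')$ is equivalent to $-x'\le x\le x'$, so
\[
\rho(i(x))=\inf\{\|x'\|\colon x'\in X,\ -x'\le x\le x'\}=\|x\|,
\]
the last equality being the definition of regularity of $\|\cdot\|$.

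For (iii), taking $i=\mathrm{id}_X$, (i) gives that $\rho$ is a regular seminorm on $X$. I prove equivalence in two steps. First, using the semimonotonicity constant $M$ of $\|\cdot\|$: for any admissible $x'\ge 0$, $0\le x+x'\le 2x'$ gives $\|x+x'\|\le 2M\|x'\|$, so $\|x\|\le\|x+x'\|+\|x'\|\le(2M+1)\|x'\|$; taking the infimum yields $\|x\|\le(2M+1)\rho(x)$. This inequality simultaneously shows that $\rho$ separates points and is therefore a norm. For the reverse bound, I invoke the Ando--Klee decomposition theorem: since $X$ is a Banach space whose closed cone $K$ is generating (as $X$ is directed), there exists $C>0$ such that every $x$ decomposes as $x=u-v$ with $u,v\in K$ and $\|u\|,\|v\|\le C\|x\|$. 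Then $x':=u+v\ge 0$ satisfies $-x'\le x\le x'$ and $\|x'\|\le 2C\|x\|$, giving $\rho(x)\le 2C\|x\|$.

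\textbf{Main obstacle.} The delicate point is the bound $\rho(x)\le 2C\|x\|$ in (iii); this is where the hypotheses ``$K$ closed'' and ``$X$ complete'' are essential, since the Ando--Klee decomposition relies on the open mapping theorem applied to the continuous surjection $K\times K\to X$, $(u,v)\mapsto u-v$, with $K\times K$ a complete metric space precisely because $K$ is closed in a Banach space. All other steps are direct manipulations of the defining formula and of the hypotheses of semimonotonicity, directedness, and bipositivity.
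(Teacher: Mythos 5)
Your proof is correct, and it is genuinely more self-contained than the paper's: the authors dispose of (i) and (ii) by citing \cite[Proposition 6.2]{Gaa2004}, and of (iii) by combining Vulikh's result (semimonotone norms are equivalent to monotone ones) with \cite[Corollary 6.4(ii)]{Gaa2004}, whereas you prove everything from scratch. Your verification of (i) and (ii) is a careful unpacking of the definitions (nonemptiness of the admissible set via majorizing plus directedness, positivity of admissible elements via bipositivity, the two inequalities for regularity, and the bipositivity argument for $\rho\circ i=\left\|\cdot\right\|$), and it is complete. For (iii) your two bounds are exactly the right ones: the estimate $\|x\|\le(2M+1)\rho(x)$ correctly isolates where semimonotonicity enters and shows in passing that $\rho$ is a norm, and the reverse bound $\rho(x)\le 2C\|x\|$ via the And\^o-type bounded decomposition theorem is precisely where closedness of $K$ and completeness are needed; this is, in substance, the content of the cited Corollary 6.4(ii), so the mathematical core is the same even though your route bypasses the intermediate passage to an equivalent monotone norm. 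The only caveat is cosmetic: you silently read $Y=X$ in (iii) as $i=\mathrm{id}_X$, but since bipositivity makes $\rho\circ i$ coincide with the $i=\mathrm{id}_X$ version in any case (as your own computation in (ii) shows), nothing is lost. What your approach buys is a proof readable without access to \cite{Gaa2004} and \cite{Vul1977}, at the cost of invoking the decomposition theorem explicitly; what the paper's approach buys is brevity.
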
 
\begin{proof}
It is contained in \cite[Proposition 6.2]{Gaa2004} that $\rho$ is a regular seminorm and that $\rho\circ i=\left\|\cdot\right\|$ if $\left\|\cdot\right\|$ is regular. By \cite[Theorems IV.2.1 and IV.2.4]{Vul1977}, $\left\|\cdot\right\|$ is equivalent to a monotone norm and then, if $(X,\left\|\cdot\right\|)$ is complete and $K$ is closed, \cite[Corollary 6.4(ii)]{Gaa2004} says that $\left\|\cdot\right\|$ is equivalent to the regular norm $\rho\circ i$.
\end{proof}

The last statement of Lemma \ref{regular-extension} is in fact on an ordered Banach space. By an \emph{ordered Banach space}  $(X,K,\left\|\cdot\right\|)$ we  mean a Banach space $(X,\left\|\cdot\right\|)$ with a closed generating cone $K$. Since $K$ is closed, the space $(X,K)$ is Archimedean, and since $K$ is generating, $(X,K)$ is directed. Consequently, $(X,K)$ is a pre-Riesz space and can be embedded into its Riesz completion $X^\rho$, see Theorem \ref{embd-preR}.

The next lemma shows how disjointness can be detected with the aid of the extension given by \eqref{norm-ext}.

\begin{lemma}
\label{lem:char_perp}
Let $(X,K,\left\|\cdot\right\|)$ be a normed partially ordered vector space such that $K$ is closed and generating. Let $Y$ be a vector lattice and
$i\colon X\to Y$ a bipositive linear map, such that $i[X]$ is majorizing in $Y$.
For $y\in Y$ let $\rho(y)$ be defined by \eqref{norm-ext}.
If $u,v\in X$ are such that $\rho(|i(u)|\wedge|i(v)|)=0$, then $u\perp v$.
\end{lemma}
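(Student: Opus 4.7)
The plan is to bridge the vector lattice data in $Y$ (where the hypothesis lives) with the definition of $\perp$ in $X$ (which is phrased purely as equality of the upper-bound sets $\{u+v,-u-v\}^{\mathrm{u}}$ and $\{u-v,-u+v\}^{\mathrm{u}}$) by means of the standard vector lattice identity
\[|a|\wedge|b| \;=\; \tfrac{1}{2}\bigl||a+b|-|a-b|\bigr|\qquad (a,b\in Y),\]
applied with $a=i(u)$, $b=i(v)$. Its right-hand side is expressed in terms of $|i(u+v)|$ and $|i(u-v)|$, which are exactly the quantities that appear when one tests disjointness in $X$ via upper bounds of $\{u+v,-u-v\}$ and $\{u-v,-u+v\}$.

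First I would extract an approximating sequence. Since $\rho(|i(u)|\wedge|i(v)|)=0$, the definition \eqref{norm-ext} produces $x_n\in X$ with $-i(x_n)\le |i(u)|\wedge|i(v)|\le i(x_n)$ and $\|x_n\|\to 0$; because $i(x_n)\ge|i(u)|\wedge|i(v)|\ge 0$, bipositivity of $i$ forces $x_n\ge 0$. Substituting the identity above into $i(x_n)\ge|i(u)|\wedge|i(v)|$ yields $2\,i(x_n)\ge \bigl||i(u+v)|-|i(u-v)|\bigr|$, whence
\[|i(u-v)|-2\,i(x_n)\;\le\;|i(u+v)|\;\le\;|i(u-v)|+2\,i(x_n)\qquad \text{in } Y,\]
and the analogous estimate with $u+v$ and $u-v$ interchanged.

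Next I would verify both inclusions of $\{u+v,-u-v\}^{\mathrm{u}}=\{u-v,-u+v\}^{\mathrm{u}}$. Take $w\in\{u+v,-u-v\}^{\mathrm{u}}$; bipositivity gives $i(w)\ge i(u+v)$ and $i(w)\ge -i(u+v)$, so $i(w)\ge|i(u+v)|$ in $Y$. Combining with the lower estimate, $i(w)+2\,i(x_n)\ge|i(u-v)|$, which is $\ge i(u-v)$ and $\ge i(v-u)$; a second use of bipositivity returns $w+2x_n\ge u-v$ and $w+2x_n\ge v-u$ in $X$. As $n\to\infty$, the elements $w+2x_n-(u-v)$ and $w+2x_n-(v-u)$ lie in $K$ and converge in norm to $w-(u-v)$ and $w-(v-u)$ respectively; closedness of $K$ then delivers $w\in\{u-v,-u+v\}^{\mathrm{u}}$. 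The reverse inclusion follows by the same argument after exchanging $u+v$ and $u-v$ and using the other estimate. Hence $u\perp v$.

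The main obstacle is precisely the transfer between $Y$ and $X$: the element $|i(u)|\wedge|i(v)|$ need not lie in $i[X]$, so bipositivity cannot be invoked on it directly. The Riesz identity is what makes the bridge work, because it re-expresses $|i(u)|\wedge|i(v)|$ in terms of $|i(u+v)|$ and $|i(u-v)|$, whose upper and lower bounds pull back cleanly through $i$. Closedness of $K$ is the other crucial ingredient: it upgrades the family of approximate inequalities $w+2x_n\ge u-v$ to the exact inequality $w\ge u-v$ in the limit.
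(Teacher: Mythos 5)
Your proof is correct and follows essentially the same route as the paper: both rest on the identity $|i(u)|\wedge|i(v)|=\tfrac12\bigl|\,|i(u)+i(v)|-|i(u)-i(v)|\,\bigr|$, extract a norm-null sequence $(x_n)$ from the vanishing of $\rho$, pull the resulting approximate inequalities back through bipositivity of $i$, and invoke closedness of $K$ to pass to the limit and obtain equality of the two upper-bound sets. The only difference is cosmetic (you bound $|i(u)|\wedge|i(v)|$ by $i(x_n)$ and carry a factor $2$, while the paper applies $\rho$ directly to $|i(u)+i(v)|-|i(u)-i(v)|$), so no further comment is needed.
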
  
\begin{proof}
Let $u,v\in X$ be such that $\rho(|i(u)|\wedge|i(v)|)=0$.
As \[|i(u)|\wedge |i(v)|=\textstyle\frac{1}{2}\left|\, \left|i(u)+i(v)\right|-\left|i(u)-i(v)\right| \,\right|\] one obtains that
$\rho\left(|i(u)+i(v)|-|i(u)-i(v)|\right)=0$.
Hence for every $n\in \N$ there exists an $x_n\in X$ such that
\begin{equation}\label{equ:i}
-i(x_n)\le |i(u)+i(v)|-|i(u)-i(v)|\le i(x_n)
\end{equation}
and $\lim_{n\to \infty}\|x_n\|= 0$. The first inequality in \eqref{equ:i} yields
\[\pm(i(u)-i(v))-i(x_n)\le |i(u)+i(v)|,\]
hence for $x\ge \pm(u+v)$ it follows that $x\ge \pm(u-v)-x_n$ for every $n\in\N$.
Since $K$ is closed and $x_n\to 0$, one obtains $x\ge \pm(u-v)$.
We conclude 
\begin{equation}\label{equ:ii}
\{u+v,-u-v\}^\mathrm{u}\subseteq\{u-v,-u+v\}^\mathrm{u}.
\end{equation}
From the second inequality in \eqref{equ:i} it follows that
\[\pm(i(u)+i(v))-i(x_n)\le |i(u)-i(v)|,\]
and an analogous argumentation yields equality in \eqref{equ:ii},
which implies that $u$ and $v$ are disjoint. 
\end{proof}

It turns out that bands are closed for regular norms.

\begin{lemma}\label{clo-band}
If
\begin{itemize}
\item[(i)] $(X,K)$ is a pre-Riesz space with a regular norm $\left\|\cdot\right\|$ such that $K$ is closed, or
\item[(ii)] $(X,K,\left\|\cdot\right\|)$ is an ordered Banach space with a semimonotone norm, 
\end{itemize}
then every band in $X$ is closed.
\end{lemma}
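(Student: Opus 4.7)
The plan is to reduce both cases to a single continuity statement: if $x_n\to x$ in norm and each $x_n\perp s$ for a fixed $s$, then $x\perp s$. Since every band $B$ equals $(B^{\mathrm{d}})^{\mathrm{d}}$, this immediately gives that a norm-limit of elements of $B$ lies in $B$.

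First I would dispose of (ii) by reducing it to (i). In that setting $(X,K)$ is directed and Archimedean (the cone is closed and generating), hence pre-Riesz. Applying Lemma~\ref{regular-extension}(iii) with $Y=X$ and $i=\mathrm{id}_X$ shows that the semimonotone norm $\|\cdot\|$ is equivalent to a regular norm, and closedness of sets is preserved under equivalent norms, so it is enough to prove the statement under hypothesis (i).

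For (i), I would fix a vector lattice cover $(Y,i)$ of $X$, for instance the Riesz completion $(X^\rho,i)$. Since $i[X]$ is order dense in $Y$ it is majorizing, so Lemma~\ref{regular-extension}(i)--(ii) furnishes a regular seminorm $\rho$ on $Y$ with $\rho\circ i=\|\cdot\|$. On the vector lattice $Y$, regular seminorms are Riesz seminorms, so Birkhoff's inequalities transfer to $\rho$: for all $a,b,c\in Y$,
\[\rho(a\wedge c-b\wedge c)\le\rho(a-b)\quad\text{and}\quad\rho(|a|-|b|)\le\rho(a-b).\]

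The continuity argument then runs as follows. Given a band $B\subseteq X$, a sequence $(x_n)\subseteq B$ converging to $x$, and any $s\in B^{\mathrm{d}}$, Proposition~\ref{homo-disj} applied to $x_n\perp s$ yields $|i(x_n)|\wedge|i(s)|=0$. The Birkhoff estimates above then give
\[\rho(|i(x)|\wedge|i(s)|)\le\rho(|i(x)|-|i(x_n)|)\le\rho(i(x-x_n))=\|x-x_n\|\longrightarrow 0,\]
so $\rho(|i(x)|\wedge|i(s)|)=0$ and Lemma~\ref{lem:char_perp} delivers $x\perp s$. Hence $x\in(B^{\mathrm{d}})^{\mathrm{d}}=B$. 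The only subtle point is that $\|\cdot\|$ is not assumed to be a lattice norm on $X$, which need not even be a vector lattice; the continuity-of-lattice-operations argument must therefore be performed one level up on $Y$, and Lemma~\ref{lem:char_perp} is precisely the tool needed to pull the conclusion back to $X$.
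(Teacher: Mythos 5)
Your proposal is correct and follows essentially the same route as the paper: reduce (ii) to (i) via Lemma \ref{regular-extension}, pass to the Riesz completion with the extended regular seminorm $\rho$, use Proposition \ref{homo-disj} to get $|i(x_n)|\wedge|i(y)|=0$, and conclude via Lemma \ref{lem:char_perp} that the limit stays in $B=B^{\mathrm{dd}}$. The only difference is cosmetic: where the paper appeals to ``continuity of the lattice operations with respect to $\rho$,'' you spell this out with the Birkhoff inequalities, which is a welcome clarification but not a different argument.
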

\begin{proof}
According to Lemma \ref{regular-extension}, the conditions in (ii) yield that the norm $\left\|\cdot\right\|$ is equivalent to a regular norm, so it suffices to give a proof under condition (i). 

Let $B$ be a band in $X$, let $(x_n)$ be a sequence in $B$ and let $x\in X$ be such that $\|x_n-x\|\to 0$.  Let $(X^\rho,i)$ be the Riesz completion of $X$, as in Theorem \ref{embd-preR}. Let $\rho$ be the Riesz seminorm on $Y=X^\rho$ defined as in \eqref{norm-ext}.

Let $y\in B^\mathrm{d}$. Then for every $n\in \mathbb{N}$ one has $x_n\perp y$, so that by Proposition \ref{homo-disj} one has $i(x_n)\perp i(y)$, which implies $|i(x_n)|\wedge |i(y)|=0$. Since $\rho(i(x_n))-i(x))=\|x_n-x\|\to 0$, the continuity of the lattice operations with respect to $\rho$ yields that $\rho(|i(x)|\wedge |i(y)|)=0$. Hence, by Lemma \ref{lem:char_perp}, we obtain $x\perp y$. It follows that $x\in B^\mathrm{dd}=B$. Hence $B$ is closed.
\end{proof}

\section{Local operators}\label{lo}

In this section we introduce a notion of local operators on pre-Riesz spaces. Local operators turn out to be a special class of disjointness preserving operators, whose definition we recall first.

\begin{definition}
Let $X$ and $Y$ be partially ordered vector spaces and let $T\colon X\rightarrow 
Y$ be a linear operator. $T$ is called \emph{disjointness preserving} if for every 
$x, y\in X$ from $x\perp y$ in $X$ it follows that $Tx\perp Ty$ in $Y$.
\end{definition}


The class of  local operators plays a role in the theory of differential equations, where the notion `local' is used ambiguously (see Remark \ref{ambig_local} below).
Also band preserving operators 
are a well-established class of operators in the theory of vector lattices, see e.g.\ \cite{AliBur1985,Mey1991}. 
In \cite[(5.4)]{Are1986C}, local operators are defined on Banach lattices.   

\begin{definition}\label{def:local}
Let $X$  be a partially ordered vector space and let $T\colon X\supseteq \mathcal{D}(T)\to X$ be a linear operator.

\begin{itemize}
\item[(i)]  $T$ is called \index{band preserving operator}
\emph{band preserving} if for every band $B$ in $X$ one has $T(B\cap \mathcal{D}(T))\subseteq B$. 
\item[(ii)] $T$ is called \emph{local} if for every $x\in \mathcal{D}(T)$, $y\in X$ with $x\perp y$ it follows that $Tx\perp y$. \index{local operator}
\end{itemize}	
\end{definition}

Observe that
if $T$ is local and $x,y\in \mathcal{D}(T)$ are such that $x\perp y$, then $Tx\perp y$ and, moreover, $Tx\perp Ty$, i.e.\
$T$ is disjointness preserving. With above definition, we observe that local operators and band preserving operators coincide in pre-Riesz space.

\begin{proposition} 
Suppose that $X$ is a pre-Riesz space and let $T\colon X\supseteq\mathcal{D}(T)\rightarrow X$ be a linear operator.
$T$ is local if and only if
$T$ is band preserving.
\end{proposition}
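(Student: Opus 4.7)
The plan is to prove both implications directly, using the fact highlighted in Section \ref{sec.preli} that in a pre-Riesz space the disjoint complement $M^{\mathrm d}$ of any subset $M\subseteq X$ is a band, and in particular every band $B$ satisfies $B=B^{\mathrm{dd}}$.

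For the forward direction (local $\Rightarrow$ band preserving), I would fix a band $B\subseteq X$ and an element $x\in B\cap\mathcal{D}(T)$. Since $B=B^{\mathrm{dd}}$, it is enough to show $Tx\in B^{\mathrm{dd}}$, i.e.\ $Tx\perp y$ for every $y\in B^{\mathrm d}$. But for such $y$ one has $x\perp y$ because $x\in B$, and then locality gives $Tx\perp y$ immediately. Hence $T(B\cap\mathcal{D}(T))\subseteq B$.

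For the reverse direction (band preserving $\Rightarrow$ local), I would take $x\in\mathcal{D}(T)$ and $y\in X$ with $x\perp y$, and consider the set $B:=\{y\}^{\mathrm d}$. Since disjoint complements in pre-Riesz spaces are bands, $B$ is a band in $X$. By assumption $x\in B$, so $x\in B\cap\mathcal{D}(T)$, and band preservation yields $Tx\in B=\{y\}^{\mathrm d}$, which is precisely $Tx\perp y$.

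Both directions are essentially a one-line application of the two key facts $B=B^{\mathrm{dd}}$ and ``$M^{\mathrm d}$ is a band for every $M$''; there is no real obstacle, and no appeal to the vector lattice cover or to norms is needed. The only point worth mentioning explicitly in the writeup is the justification that $\{y\}^{\mathrm d}$ is a band, which is exactly what makes the argument work in the pre-Riesz setting rather than just in vector lattices and which is recorded in the preliminaries as a consequence of the results in \cite{KalGaa2008a}.
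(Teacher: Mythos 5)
Your proposal is correct and follows essentially the same argument as the paper: the forward direction shows $Tx\perp y$ for all $y\in B^{\mathrm d}$ and invokes $B=B^{\mathrm{dd}}$, while the reverse direction applies band preservation to the band $\{y\}^{\mathrm d}$. The paper routes the forward implication through $Tx\in\{x\}^{\mathrm{dd}}\subseteq B^{\mathrm{dd}}=B$, which is only a cosmetic difference from your version.
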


\begin{proof}
Let $T$ be local. For every $x\in \mathcal{D}(T)$
one obtains $Tx\in \{x\}^{\operatorname{dd}}$. Indeed, for every $y\in \{x\}^{\operatorname{d}}$ it follows that $Tx\perp y$,
therefore $Tx\in \{x\}^{\operatorname{dd}}$.
Let $B$ be a band in $X$ and $x\in B\cap\mathcal{D}(T)$. Then $\{x\}^{\operatorname{dd}}\subseteq B^{\operatorname{dd}}=B$, hence $Tx\in B$. We conclude that $T$ is band preserving. 

Now let $T$ be band preserving, $x\in \mathcal{D}(T)$ and $y\in X$ such that $x\perp y$. Then $\{y\}^{\operatorname{d}}$ is a band 
in $X$, and $x\in \{y\}^{\operatorname{d}}\cap \mathcal{D}(T)$, hence $Tx\in \{y\}^{\operatorname{d}}$, which yields $Tx\perp y$.
Consequently $T$ is local.
\end{proof}
     
Typical examples of local operators are differential operators and multiplication operators. 
We discuss some of these settings in the subsequent remarks.
Let $\mathcal{L}(X)$ denote the bounded linear operators on $X$.

\begin{remark}\label{rem:mult_opt}
If $X$ is a Banach lattice, every bounded local operator on $X$
is contained in the \emph{center} \[\mathrm{Z}(X):=\{T\in \mathcal{L}(X); \ \exists \alpha>0 \colon -\alpha I\le T\le \alpha I \}.\] 
More precisely, the following assertions are equivalent for a 
bounded linear operator $T\colon X\to X$ (see e.g.\ \cite[Section 9]{NagSch1986C}):
\begin{itemize}
\item[(i)] $T$ is local,
\item[(ii)] $-\|T\|I\le T \le \|T\|I$,
\item[(iii)] for every ideal $J$ in X one has $T[J]\subseteq J$.
\end{itemize}
In \cite[Section 9]{NagSch1986C} the following two examples are given to illustrate that local operators are closely related to multiplication operators.
\begin{itemize}
\item[(a)] Let $(\Omega,\mu)$ be a $\sigma$-finite measure space and 
$X=\mathrm{L}^p(\Omega,\mu)$ ($1\le p\le \infty$), then
$\mathrm{Z}(X)$ is isomorphic to $\mathrm{L}^\infty(\Omega,\mu)$ via the
identification of $y\in \mathrm{L}^\infty(\Omega,\mu)$ with the operator from $X$ to $X$ given by $x\mapsto yx$.

\item[(b)] Let $\Omega$ be a locally compact Hausdorff space and $X$ the space $\mathrm{C}_0(\Omega)$ of all continuous functions $x$ on $\Omega$ vanishing at infinity (i.e.\ for every $\varepsilon >0$ there is a compact set $C\subseteq \Omega$ such that for every $t\in\Omega\setminus C$ one has $|x(t)|<\varepsilon$), endowed with the supremum norm. $\mathrm{Z}(X)$ is isomorphic to the space $\mathrm{C}^\mathrm{b}(\Omega)$ of bounded continuous functions via the
identification of $y\in \mathrm{C}^\mathrm{b}(\Omega)$ with the operator
from $X$ to $X$ given by $x\mapsto yx$.
\end{itemize}
\end{remark}

\begin{remark}\label{propertyL}
We continue the discussion of case (a) above for unbounded $T$. 
Let $X=\mathrm{L}^p(\Omega,\mu)$ ($1\le p\le \infty$) and $T\colon  \mathrm{L}^p(\Omega,\mu)\supseteq \mathcal{D}(T)\to \mathrm{L}^p(\Omega,\mu)$.
A notion of `locality' in this setting is defined in \cite[I.4.13(8)]{EngNag2000}. To avoid confusion, we denote this property by (L):
 
\begin{itemize}
\item[(L)] For every measurable set $S\subseteq \Omega$ and for every $x\in \mathcal{D}(T)$ with $x=0$ almost everywhere on $S$ it follows that $Tx=0$ almost everywhere on $S$. 
\end{itemize}
$T$ is local if and only if (L) is satisfied. Indeed, suppose that (L) is satisfied and let 
$x\in \mathcal{D}(T)$, $y\in X$ and
$x\perp y$. Then $S:=\{t\in \Omega; y(t)\neq 0\}$ is measurable
and $x=0$ almost everywhere on $S$, hence $Tx=0$ almost everywhere on $S$, which implies $Tx\perp y$. Therefore 
$T$ is local. Now suppose that $T$ is local and let $S\subseteq \Omega$ be measurable and $x\in \mathcal{D}(T)$ be such that $x=0$ almost everywhere on $S$.
Then $x\perp \chi_S$, hence $Tx \perp \chi_S$, which implies 
$Tx=0$ on $S$. Consequently (L) is satisfied.
In the present setting, a local operator need not be a multiplication operator, 
consider e.g.\ the operator $T\colon \mathrm{L}^p[0,1]\supseteq \mathrm{C}^1[0,1] \to \mathrm{L}^p[0,1]$, $x\mapsto x'$. 
\end{remark}

\begin{remark}\label{ambig_local}
We continue the discussion of (b) in Remark \ref{rem:mult_opt},
where we now consider the special case where $\Omega$ is a compact Hausdorff space and $X=\mathrm{C}(\Omega)$. For every bounded local operator $T\colon \mathrm{C}(\Omega)\to \mathrm{C}(\Omega)$ there is $y\in \mathrm{C}(\Omega)$ such that $T\colon x\mapsto yx$. (This can be deduced from the fact that $\mathrm{C}(\Omega)$ is an Archimedean f-algebra with unit and that $T$ is band preserving and order bounded, see  
 \cite[Theorem 8.27]{{AliBur1985}}.) We discuss several notions of locality for unbounded operators $T$.

(I) In \cite[Theorem 3.7]{Are1986B} a linear operator $T\colon \mathrm{C}(\Omega) \supseteq\mathcal{D}(T)\to \mathrm{C}(\Omega)$ is called \textit{local} if  
the following property is satisfied:
\begin{equation}\label{Equ:i}
\forall \, x\in \mathcal{D}(T),\ x\ge 0, \ \omega\in\Omega \ \mbox{ with } \ x(\omega)=0 \ \Rightarrow \ (Tx)(\omega)=0. 
\end{equation}
If \eqref{Equ:i} is satisfied for an operator $T$, then $T$ is local in the sense of Definition \ref{def:local}. The converse implication is not true, in general.
Indeed, consider $\mathcal{D}(T):=\{x\in \mathrm{C}^2[0,1]; x'(0)=x'(1)=0 \}$ and \[T\colon \mathrm{C}[0,1]\supseteq \mathcal{D}(T)\to \mathrm{C}[0,1], \ x\mapsto x''\]
(cf. the one-dimensional diffusion semigroup in \cite[2.7]{NagSch1986}). On one hand, $T$ does not satisfy \eqref{Equ:i}, take e.g.\ $x(t)=\left(t-\frac{1}{2}\right)^2$.
On the other hand, $T$ is local. Indeed, let $x\in \mathcal{D}(T)$, $y\in \mathrm{C}[0,1]$ with $x\perp y$, and $N:=\{t\in[0,1]; x(t)=0\}$. For $t\in \operatorname{int}N$ one has $(Tx)(t)=0$, whereas for $t\in \partial N$ and every $n\in \N$ there is $t_n\in\left[t-\frac{1}{n}, t+\frac{1}{n}\right]\cap [0,1]$ such that $x(t_n)\neq 0$, i.e.\ $y(t_n)=0$, which implies $y(t)=0$. Hence $Tx\perp y$, consequently $T$ is local. 

(II) An operator $T\colon \mathrm{C}(\Omega) \supseteq\mathcal{D}(T)\to \mathrm{C}(\Omega)$ satisfies \eqref{Equ:i} if and only if for every open set $O\subseteq \Omega$ one has $T(I_O\cap \mathcal{D}(T))\subseteq I_O$, where \[I_O=\{x\in \mathrm{C}(\Omega); \forall \omega\in\Omega\setminus O\colon x(\omega)=0\},\] i.e.\ $T$ preserves for every open set $O\subseteq \Omega$ the largest ideal having $O$ as its carrier.

(III) We relate \eqref{Equ:i} to a notion of locality given in \cite[p.147, second Remark]{Are1986B}.
Let $(X,K)$ be a partially ordered vector space 
with a norm $\left\|\cdot\right\|$.
For a linear operator $T\colon X\supseteq \mathcal{D}(T)\to X$ the following property is considered:
\begin{equation}\label{Equ:ii}
\forall \, x\in \mathcal{D}(T)\cap K, \ f\in K' \ \mbox{ with } \ f(x)=0 \ \Rightarrow \ f(Tx)= 0. 
\end{equation}
Note that \eqref{Equ:ii} is satisfied if and only if $T$ and $-T$ are \textit{positive-off-diagonal} (for a definition of this notion see  \cite[Definition 7.18]{CleHeiAngDuiPag1987}; cf.\ also the \emph{positive minimum principle} in \cite[Definition 1.6]{Are1986C}).

Before we link \eqref{Equ:i} and \eqref{Equ:ii}, we reformulate 
\eqref{Equ:ii} for the case that $X$ is an Archimedean partially ordered vector space with order unit $u$. The norm induced by $u$, which
will be denoted by $\left\|\cdot\right\|_u$, is defined by
\[\left\|x\right\|_u=\inf\{\alpha\in [0,\infty)\colon\, -\alpha u\le x\le \alpha u\}, x\in X.\]
Let \[\Sigma\colon=\{f\colon X\to \mathbb{R};\ f \mbox{ positive }\mbox{ linear}, \mbox{ and } f(u)=1\},\]
and denote $\Lambda$ by the set of extreme points of $\Sigma$.
For $x\in \mathcal{D}(T)\cap K$
the set $N=\{f\in \Sigma;\, f(x)=0\}$ is the weak* closure of the convex hull of $N\cap \Lambda$ (see also \cite[(3.6)]{Kal2006}).
Therefore
\eqref{Equ:ii} holds if and only if 
\begin{equation}\label{Equ:iii}
\forall \, x\in \mathcal{D}(T)\cap K, \ f\in \Lambda \ \mbox{ with } \ f(x)=0 \ \Rightarrow \ f(Tx)= 0. 
\end{equation}
For $X=\mathrm{C}(\Omega)$ as above and $u$ the constant-one function, $\Lambda$ is homeomorphic to $\Omega$, hence 
the conditions \eqref{Equ:i} and \eqref{Equ:iii} are equivalent.
This implies the equivalence of \eqref{Equ:i} and \eqref{Equ:ii}.
\end{remark}


Now let us consider some basic results on local operators on pre-Riesz spaces. The following lemma will be needed in the proof of Theorem \ref{bddlocalgenerator}.

\begin{lemma}\label{sum-local}
Let $X$ be a pre-Riesz space.
\begin{itemize}
\item[(i)] If $S\colon X\supseteq \mathcal{D}(S)\to X$ and $T\colon X\supseteq \mathcal{D}(T)\to X$ are local operators and $\alpha,\beta\in\mathbb{R}$, then $\alpha S+\beta T\colon X\supseteq \mathcal{D}(S)\cap\mathcal{D}(T)\to X$ is a local operator.
\item[(ii)] If $S\colon X\supseteq \mathcal{D}(S)\to X$ and $T\colon X \supseteq \mathcal{D}(T)\to \mathcal{D}(S)\subseteq X$ are local operators, then $ST\colon X \supseteq \mathcal{D}(T)\to X$ is a local operator.
\end{itemize}
\end{lemma}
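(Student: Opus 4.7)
The plan is to reduce both parts to the observation, recorded earlier in the excerpt, that in a pre-Riesz space every disjoint complement $\{y\}^{\mathrm{d}}$ is a linear subspace (in fact a band). Once that is in hand, locality of an operator $T$ at a point $x\in\mathcal{D}(T)$ with $x\perp y$ is just the assertion $Tx\in\{y\}^{\mathrm{d}}$, so closure properties of $\{y\}^{\mathrm{d}}$ transfer directly into closure properties of the class of local operators.

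For part (i), I would fix $x\in\mathcal{D}(S)\cap\mathcal{D}(T)$ and $y\in X$ with $x\perp y$. Locality of $S$ and $T$ gives $Sx\in\{y\}^{\mathrm{d}}$ and $Tx\in\{y\}^{\mathrm{d}}$. Because $\{y\}^{\mathrm{d}}$ is a linear subspace of $X$, the combination $\alpha Sx+\beta Tx=(\alpha S+\beta T)x$ still lies in $\{y\}^{\mathrm{d}}$, i.e.\ $(\alpha S+\beta T)x\perp y$. Since $x$ and $y$ were arbitrary (with $x\in\mathcal{D}(\alpha S+\beta T)=\mathcal{D}(S)\cap\mathcal{D}(T)$ and $x\perp y$), this exhibits $\alpha S+\beta T$ as local.

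For part (ii), I would again fix $x\in\mathcal{D}(T)$ and $y\in X$ with $x\perp y$. Locality of $T$ gives $Tx\perp y$, and by the domain assumption $Tx\in\mathcal{D}(S)$. Applying locality of $S$ to the pair $(Tx,y)$ then yields $S(Tx)\perp y$, which is exactly $(ST)x\perp y$. As $\mathcal{D}(ST)=\mathcal{D}(T)$ under the standing hypothesis $T[\mathcal{D}(T)]\subseteq\mathcal{D}(S)$, this shows $ST$ is local.

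There is no real obstacle here: the only substantive fact used is the linearity of disjoint complements in pre-Riesz spaces, which has already been established in Section~\ref{sec.preli} via Proposition~\ref{homo-disj} and the corresponding property in the vector lattice cover. I would therefore keep the proof short, essentially just citing that linearity (or, if preferred, appealing to the equivalent band-preserving formulation from the preceding proposition, where (i) is immediate because the intersection/sum construction preserves invariance of bands, and (ii) is immediate because a composition of band-preserving operators is band-preserving).
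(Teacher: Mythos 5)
Your argument is correct and coincides with the paper's own proof: part (i) uses exactly the linearity of the disjoint complement $\{y\}^{\mathrm{d}}$ to conclude $\alpha Sx+\beta Tx\perp y$, and part (ii) chains the two locality statements through $Tx\in\mathcal{D}(S)$. No gaps; nothing further is needed.
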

\begin{proof}
(i) Let $x\in \mathcal{D}(S)\cap\mathcal{D}(T)$ and $y\in X$ be such that $x\perp y$. Then $Sx\perp y$ and $Tx\perp y$, so that $Sx,Tx\in \{y\}^\mathrm{d}$. Since $\{y\}^\mathrm{d}$ is a linear subspace, it follows that $\alpha Sx+\beta Tx\perp y$. Hence $\alpha S+\beta T$ is local.

(ii) Let $x\in \mathcal{D}(T),y\in X$ be such that $x\perp y$. Then $Tx\perp y$ as $T$ is local, so that $STx\perp y$ as $S$ is local and $Tx\in \mathcal{D}(S)$. Hence $ST$ is local.
\end{proof}

Next we consider a result on the inverse of a local operator, which we need in the proof of Corollary \ref{posresolvent} below. It turns out that the inverse of a local operator $T$ is local if both $T$ and $T^{-1}$ are positive. We first need the next lemma. 

\begin{lemma}\label{biposRieszstar}
If $X$ and $Y$ are pre-Riesz spaces and $T\colon X\to Y$ is a bipositive Riesz* homomorphism, then for every $x,y\in X$ we have
\[x\perp y \ \Longleftrightarrow\ Tx\perp Ty.\]
\end{lemma}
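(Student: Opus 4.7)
The plan is to prove the two implications separately, using the Riesz* property for the forward direction and bipositivity for the reverse.

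For $(\Rightarrow)$, I would start from $x\perp y$, which by definition says $\{x+y,-x-y\}^\mathrm{u}=\{x-y,-x+y\}^\mathrm{u}$, and take lower bounds of both sides to obtain $\{x+y,-x-y\}^\mathrm{ul}=\{x-y,-x+y\}^\mathrm{ul}$. The key observation is that $x+y$ and $-x-y$ are trivially lower bounds of $\{x+y,-x-y\}^\mathrm{u}$, hence lie in the common $\mathrm{ul}$ set $\{x-y,-x+y\}^\mathrm{ul}$. Applying the Riesz* property with $M=\{x-y,-x+y\}$,
\[T[\{x-y,-x+y\}^\mathrm{ul}]\subseteq \{Tx-Ty,-Tx+Ty\}^\mathrm{ul},\]
I conclude that $Tx+Ty$ and $-Tx-Ty$ lie in $\{Tx-Ty,-Tx+Ty\}^\mathrm{ul}$. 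Unpacking the definition of $\mathrm{ul}$, this means $\{Tx-Ty,-Tx+Ty\}^\mathrm{u}\subseteq\{Tx+Ty,-Tx-Ty\}^\mathrm{u}$. Running the symmetric argument, i.e. starting from $x-y,-x+y\in\{x+y,-x-y\}^\mathrm{ul}$ and applying Riesz* with $M=\{x+y,-x-y\}$, yields the reverse inclusion, so $Tx\perp Ty$.

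For $(\Leftarrow)$, I would only need that $T$ is bipositive (hence in particular positive). Let $z\in\{x+y,-x-y\}^\mathrm{u}$; monotonicity of $T$ gives $Tz\in\{Tx+Ty,-Tx-Ty\}^\mathrm{u}$, and by $Tx\perp Ty$ this set equals $\{Tx-Ty,-Tx+Ty\}^\mathrm{u}$. Hence $T(z-x+y)\ge 0$ and $T(z+x-y)\ge 0$, and bipositivity lifts these to $z\ge x-y$ and $z\ge -x+y$, so $z\in\{x-y,-x+y\}^\mathrm{u}$. The analogous argument with the roles of the two pairs swapped gives the reverse inclusion, yielding $x\perp y$.

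The step I expect to be the subtlest is the forward direction: disjointness in a pre-Riesz space is stated as an equality of \emph{upper-bound sets}, whereas the Riesz* hypothesis controls images of \emph{lower bounds of upper bounds}. Bridging the two requires the small dualisation trick of taking lower bounds of the disjointness identity, recognising $\pm(x+y)$ and $\pm(x-y)$ as obvious members of the $\mathrm{ul}$-closures, and then translating membership in $T[M]^\mathrm{ul}$ back into an inclusion of $\mathrm{u}$-sets in $Y$. Once this dualisation is set up, the rest is mechanical, and no appeal to the vector lattice cover or Proposition \ref{homo-disj} is needed, although one could alternatively give a slicker proof by extending $T$ to a Riesz homomorphism between the Riesz completions and invoking Proposition \ref{homo-disj} on both sides.
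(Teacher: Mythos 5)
Your proof is correct, and the forward direction takes a genuinely different route from the paper. For $(\Rightarrow)$ the paper does exactly what you describe as the ``slicker'' alternative: it invokes \cite[Theorem 5.6]{Haa1993} to extend the Riesz* homomorphism $T$ to a Riesz homomorphism $T_\rho\colon X^\rho\to Y^\rho$ between the Riesz completions, notes that $T_\rho$ is disjointness preserving, and transports disjointness back and forth via Proposition \ref{homo-disj}. Your argument instead works entirely inside $X$ and $Y$ with the raw definition of a Riesz* homomorphism: the dualisation step (passing from the equality of $\mathrm{u}$-sets to the equality of $\mathrm{ul}$-sets, observing $\pm(x+y)\in\{x+y,-x-y\}^\mathrm{ul}=\{x-y,-x+y\}^\mathrm{ul}$, pushing through $T[M^\mathrm{ul}]\subseteq T[M]^\mathrm{ul}$, and reading off $\{Tx-Ty,-Tx+Ty\}^\mathrm{u}\subseteq\{Tx+Ty,-Tx-Ty\}^\mathrm{u}$ together with its symmetric counterpart) is sound, and it buys a self-contained, elementary proof that does not rely on the extension theorem or on any vector lattice cover; the paper's version buys brevity at the cost of importing that machinery. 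Note that your route in fact reproves, in the special case of two-point sets, the general fact that Riesz* homomorphisms are disjointness preserving. For $(\Leftarrow)$ both proofs are essentially the same use of bipositivity; you simply unpack the restriction-to-$T[X]$ step that the paper delegates to \cite[Proposition 2.1(i)]{KalGaa2006}.
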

\begin{proof}
$\Leftarrow$) If $Tx\perp Ty$ in $Y$, then it follows directly from the definition of disjointness  that $Tx\perp Ty$ in $T[X]$ (see also \cite[Proposition 2.1(i)]{KalGaa2006}), so that bipositivity of $T$ yields that $x\perp y$.

$\Rightarrow$) If $(X^{\rho}, i_X)$ and $(Y^{\rho},i_Y)$ are the Riesz completions of $X$ and $Y$, respectively, then due to \cite[Theorem 5.6]{Haa1993} 
there is a linear Riesz homomorphism $T_{\rho}\colon X^{\rho}\to Y^{\rho}$ that extends $T$ in the sense that $T_{\rho}\circ i_X= i_Y\circ T$.
In particular, $T_{\rho}$ is positive and disjointness preserving. Recall that $x\perp y$ in $X$ if and only if $i_X(x)\perp i_X(y)$ in $X^\rho$, and similarly for elements in $Y$. For $x,y\in X$ with $x\perp y$ we thus have $i_X(x)\perp i_X(y)$ and therefore $T_\rho (i_X(x))\perp T_\rho (i_X(y))$, so that $i_Y (T(x))\perp i_Y(T(y))$, which yields that $T(x)\perp T(y)$.
\end{proof}

\begin{proposition}\label{inv-loc}
Let $(X,K)$ be a pre-Riesz space, let $T\colon X\supseteq \mathcal{D}(T)\rightarrow X$ be a bijective linear operator such that the inclusion map $i\colon \mathcal{D}(T)\to X$ is a Riesz* homomorphism. If both $T$ and $T^{-1}$ are positive and $T$ is local, then $T^{-1}$ is also local.
\end{proposition}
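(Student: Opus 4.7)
The plan is to verify locality of $T^{-1}$ directly: pick $x,y\in X$ with $x\perp y$, and show $T^{-1}x\perp y$. The strategy proceeds in two stages. First I would show the stronger disjointness preservation property $T^{-1}x\perp T^{-1}y$ in $X$, using only bipositivity of $T$ and the Riesz* hypothesis on the inclusion. Then I would invoke locality of $T$ to transport one side of the disjointness from $T^{-1}y$ back to $y$.

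For the first stage, observe that since both $T$ and $T^{-1}$ are positive, $T^{-1}\colon X\to\mathcal{D}(T)$ is a bipositive linear bijection, hence an order isomorphism from $X$ onto $\mathcal{D}(T)$ with the inherited order. Consequently $T^{-1}$ maps upper bound sets bijectively: for any $M\subseteq X$ one has $T^{-1}[M^{\mathrm{u}}]=T^{-1}[M]^{\mathrm{u}}$, where on the right-hand side upper bounds are taken in $\mathcal{D}(T)$. Applying this correspondence to the defining equality $\{x+y,-x-y\}^{\mathrm{u}}=\{x-y,-x+y\}^{\mathrm{u}}$ of $x\perp y$ in $X$ yields $T^{-1}x\perp T^{-1}y$ in $\mathcal{D}(T)$. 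The inclusion $i\colon\mathcal{D}(T)\to X$ is bipositive (trivially) and Riesz* by hypothesis, so Lemma \ref{biposRieszstar} then lifts this to $T^{-1}x\perp T^{-1}y$ in $X$.

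For the second stage, note that $T^{-1}y\in\mathcal{D}(T)$ while $T^{-1}x\in X$, and the first stage provides their disjointness in $X$. Applying the locality of $T$ to $v:=T^{-1}y\in\mathcal{D}(T)$ and $z:=T^{-1}x\in X$, one gets $Tv\perp z$ in $X$, i.e.\ $y\perp T^{-1}x$, which is exactly $T^{-1}x\perp y$.

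The main subtlety, and the reason a bare reflection argument does not suffice, is that $y$ need not belong to $\mathcal{D}(T)$, so one cannot hope to directly convert $Tu\perp y$ into $u\perp y$ by applying $T^{-1}$. The idea that makes the argument go through is that $T^{-1}$ is nevertheless defined on all of $X$, which permits the detour through $T^{-1}y$: one first derives disjointness between two elements of $\mathcal{D}(T)$, and then uses locality of $T$ on the $T^{-1}y$ side to return to $y$. The Riesz* assumption on the inclusion is essential precisely because it guarantees that disjointness of elements of $\mathcal{D}(T)$, computed in the subspace, agrees with disjointness computed in $X$, which is what links the two stages of the argument.
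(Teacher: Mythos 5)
Your proof is correct, and it takes a genuinely different route from the paper's. The paper argues directly: for $x\in X$ and $y\in\mathcal{D}(T)$ with $x\perp y$ it compares the sets $\{T^{-1}x+y,-T^{-1}x-y\}^{\mathrm{u}}\cap\mathcal{D}(T)$ and $\{T^{-1}x-y,-T^{-1}x+y\}^{\mathrm{u}}\cap\mathcal{D}(T)$ by pushing an upper bound $z$ forward with $T$ (positivity), invoking locality of $T$ in the form $Ty\perp x$ to exchange $x+Ty$ for $x-Ty$, and pulling back with $T^{-1}$ (positivity); the Riesz* inclusion then lifts the resulting disjointness from $\mathcal{D}(T)$ to $X$. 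You instead first prove that $T^{-1}$ preserves disjointness outright --- your upper-bound correspondence $T^{-1}[M^{\mathrm{u}}]=T^{-1}[M]^{\mathrm{u}}$ is exactly the mechanism of Lemma \ref{genr-disj}, transplanted to the bipositive bijection $T^{-1}\colon X\to\mathcal{D}(T)$ --- and only afterwards use locality of $T$, applied to the pair $(T^{-1}y,T^{-1}x)$, to trade $T^{-1}y$ back for $y$. The same three ingredients (positivity of $T$ and $T^{-1}$, locality of $T$, the Riesz* hypothesis on the inclusion) appear in both arguments, but in a different order, and your ordering buys something concrete: because the paper's computation must form $Ty$, it is carried out only for $y\in\mathcal{D}(T)$, whereas Definition \ref{def:local} requires $T^{-1}x\perp y$ for \emph{every} $y\in X$; your detour through $T^{-1}y$ is precisely what removes that restriction, so your argument covers the full statement rather than the special case. (Both proofs invoke Lemma \ref{biposRieszstar} with domain $\mathcal{D}(T)$, which tacitly requires $\mathcal{D}(T)$ to be pre-Riesz; in your setup this is automatic, since $\mathcal{D}(T)$ is order isomorphic to the pre-Riesz space $X$ via $T$.)
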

\begin{proof}
As a first step, for $x\in X, y\in \mathcal{D}(T)$ with $x\perp y$ we show that $T^{-1}x\perp y$ in $\mathcal{D}(T)$, which comes down to $\{T^{-1}x+y,-T^{-1}x-y\}^\mathrm{u}\cap\mathcal{D}(T)=\{T^{-1}x-y,-T^{-1}x+y\}^\mathrm{u}\cap\mathcal{D}(T)$. Let $z\in \{T^{-1}x+y,-T^{-1}x-y\}^\mathrm{u}\cap\mathcal{D}(T)$. Then, as $T$ is positive, $Tz\ge x+Ty,-x-Ty$. Since $T$ is local and therefore $Ty\perp x$, we have $Tz\ge x-Ty,-x+Ty$, so $z\ge T^{-1}x-y, -T^{-1}x+y$, as $T^{-1}$ is positive. Therefore, $\{T^{-1}x+y,-T^{-1}x-y\}^\mathrm{u}\cap\mathcal{D}(T)\subseteq \{T^{-1}x-y,-T^{-1}x+y\}^\mathrm{u}\cap\mathcal{D}(T)$. A similar proof yields the converse inclusion, so that $\{T^{-1}x+y,-T^{-1}x-y\}^\mathrm{u}\cap\mathcal{D}(T)= \{T^{-1}x-y,-T^{-1}x+y\}^\mathrm{u}\cap\mathcal{D}(T)$. This shows that
$T^{-1}x\perp y$ in $\mathcal{D}(T)$.

Since the inclusion map $i\colon \mathcal{D}(T)\to X$ is a Riesz* homomorphism, Lemma \ref{biposRieszstar} yields that $i(T^{-1}x)\perp i(y)$, hence $T^{-1}x\perp y$ in $X$. Thus, $T^{-1}$ is local.
\end{proof}

\hide{
Similar to Remark \ref{rem:mult_opt}(b), positive local operators on order dense subspaces of $\mathrm{C}_0(\Omega)$ are multiplication operators.  For a pre-Riesz space $X$  as an order dense subspace of $\mathrm{C}(\Omega)$, the positive linear
operators on $X$ are multiplication operators as well, by
Example \ref{loc-mult}.

For a positive linear operator  defined on a majorizing subspace of Archimedean
Riesz space, it can be extended to all of this Achimedean Riesz space, see \cite[Corollary 1.5.4]{Mey1991}, where the image space should be a Dedekind complete Riesz space. Hence, a positive linear operator defined on a pre-Riesz 
space has an extension, and this extension can be viewed as a multiplication operator by above statements, see following example.

\begin{example}\label{loc-mult}
Let $X$ be an ordered dense subspace of $\mathrm{C}(\Omega)$, hence $X$ is a pre-Riesz space. Consider a positive local linear operator $T\colon X\rightarrow \mathbb{R}$. Let 
$X$ is pervasive and majorizing subspace, and $T$ is local. Let 
\begin{equation}\label{ext-ope}
\widehat{T}(x):=\sup \{Ty\colon y\in X, y\le x\},\ \ x\in \mathrm{C}(\Omega).
\end{equation}
Then $\widehat{T}$ is monotone-sublinear, and  an extension of $T$.
For $u\in X$, we know $\{u\}^\mathrm{d}\cap \mathrm{C}(\Omega)$ is closed. Take $v\in \{u\}^\mathrm{d}\cap \mathrm{C}(\Omega)$. As $X$ is pervasive, so there exists an $m\neq 0$, $0\le m\in X$
with $0\le m\le v$. So $m\perp u$ and $m\in \{u\}^\mathrm{d}$. As $T$ is local, so $Tm\in \{u\}^\mathrm{d}$. As $\{u\}^\mathrm{d}$ is closed, so by (\ref{ext-ope}), we have 
\[\widehat{T}(v)=\sup \{Tm\colon m\in X, m\le v\},\]
hence $\widehat{T}(v)\in \{u\}^\mathrm{d}$ and $\widehat{T}(v)\perp u$. This shows that 
$\widehat{T}$ is local on $\mathrm{C}(\Omega)$. Hence, there exists 
$h\in \mathrm{C}^\mathrm{b}(\Omega) $ such that $\widehat{T}(x)=hx$ for arbitrary $x\in \mathrm{C}(\Omega)$ and $\widehat{T}\big|_X=T$. Hence $T$ is a multiplication operator on $X$.
\end{example}
} 

We conclude this section by the following simple observation.

\begin{lemma}\label{genr-disj}
Let $(X,K)$ be a pre-Riesz space and $T\colon X\rightarrow X$ a bipositive linear bijection. Then $T$ is disjointness preserving.
\end{lemma}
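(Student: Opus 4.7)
The plan is to unwind the definition of disjointness from \cite{KalGaa2006}, namely that $x\perp y$ in $X$ is equivalent to the set equality
\[\{x+y,-x-y\}^{\mathrm{u}}=\{x-y,-x+y\}^{\mathrm{u}},\]
and to transport this equality through $T$. For this the only tool needed is that a bipositive linear bijection is an order isomorphism: if $a,b\in X$ then $a\le b$ iff $b-a\in K$ iff $T(b-a)=Tb-Ta\in K$ iff $Ta\le Tb$. Since $T$ is also bijective, I expect to establish the auxiliary identity
\[T[\{a,b\}^{\mathrm{u}}]=\{Ta,Tb\}^{\mathrm{u}}\]
for every $a,b\in X$.

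To verify this identity, I would argue both inclusions. For $\subseteq$, pick $w\in\{a,b\}^{\mathrm{u}}$; then $w\ge a,b$, so by positivity of $T$ we have $Tw\ge Ta,Tb$. For $\supseteq$, take $z\in\{Ta,Tb\}^{\mathrm{u}}$; surjectivity of $T$ yields some $w\in X$ with $Tw=z$, and then $Tw\ge Ta,Tb$ combined with the bipositivity of $T$ (applied to $w-a$ and $w-b$) forces $w\ge a,b$, i.e.\ $w\in\{a,b\}^{\mathrm{u}}$.

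With this identity in hand, the proof reduces to a direct substitution. Given $x\perp y$ in $X$, apply $T$ to both sides of $\{x+y,-x-y\}^{\mathrm{u}}=\{x-y,-x+y\}^{\mathrm{u}}$, using the above identity together with the linearity of $T$:
\[\{Tx+Ty,-Tx-Ty\}^{\mathrm{u}}=T[\{x+y,-x-y\}^{\mathrm{u}}]=T[\{x-y,-x+y\}^{\mathrm{u}}]=\{Tx-Ty,-Tx+Ty\}^{\mathrm{u}},\]
which is exactly $Tx\perp Ty$.

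There is no real obstacle here; the whole content is that bipositivity plus bijectivity upgrades $T$ to an order isomorphism, and the disjointness relation is expressed purely in terms of upper-bound sets, so it is preserved. The only thing to be careful about is not to confuse bipositivity with positivity of both $T$ and $T^{-1}$ (which are in fact equivalent under bijectivity), and to remember to use surjectivity in the nontrivial inclusion of the auxiliary identity.
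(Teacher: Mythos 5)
Your proof is correct and follows essentially the same route as the paper's: both use bipositivity to translate the upper-bound conditions and surjectivity to conclude that the transported sets are exactly $\{Tx\pm Ty, -(Tx\pm Ty)\}^{\mathrm{u}}$. Your version merely packages this as the explicit identity $T[\{a,b\}^{\mathrm{u}}]=\{Ta,Tb\}^{\mathrm{u}}$, which the paper carries out inline.
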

\begin{proof}
Let $x,y\in X$ be such that $x\perp y$. Then $\{x+y,-x-y\}^\mathrm{u}=\{x-y,-x+y\}^\mathrm{u}$. Hence for every $u\in X$ we have 
\[u\ge x+y,-x-y \ \Longleftrightarrow\ u\ge x-y, -x+y.\]
As $T$ is bipositive, the latter is equivalent to
\[Tu\ge Tx+Ty,-Tx-Ty \ \Longleftrightarrow\ Tu\ge Tx-Ty, -Tx+Ty.\]
Since $T$ is surjective, this comes down to $\{Tx+Ty,-Tx-Ty\}^\mathrm{u}=\{Tx-Ty,-Tx+Ty\}^\mathrm{u}$. Hence $Tx\perp Ty$. 
\end{proof}




\section{Disjointness preserving $\mathrm{C}_0$-semigroups}\label{dpcs}
Let  $(X,K,\left\|\cdot\right\|)$ be an ordered Banach space. A $\mathrm{C}_0$-semigroup $T\colon [0,\infty)\to \mathcal{L}(X)$ 
is called \emph{disjointness preserving} if for every $t\in[0,\infty)$ the operator $T(t)$ is disjointness preserving. \index{disjointness preserving $\mathrm{C}_0$-semigroup}
Disjointness preserving $\mathrm{C}_0$-semigroups on Banach lattices are discussed e.g.\ in \cite[Section 5]{Are1986C}, where, in particular, it is shown that 
the generator of a disjointness preserving $\mathrm{C}_0$-semigroup is local.
We prove the analogous result for disjointness preserving $\mathrm{C}_0$-semigroups on ordered Banach spaces with a semimonotone norm.

\begin{theorem}
\label{the:op_sg_local}
Let $(X,K,\left\|\cdot\right\|)$ be an ordered Banach space with a semimonotone norm and let 
$T\colon [0,\infty)\to \mathcal{L}(X)$ be a disjointness preserving $\mathrm{C}_0$-semigroup with generator $A$. Then $A$ is local.
\end{theorem}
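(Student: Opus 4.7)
My plan is to take $x\in\mathcal{D}(A)$ and $y\in X$ with $x\perp y$ and prove $Ax\perp y$. Since $K$ is closed and $X$ is complete, $(X,K)$ is Archimedean, directed, and pre-Riesz; by Lemma \ref{regular-extension}(iii) I may replace $\|\cdot\|$ by an equivalent regular norm, and then apply Lemma \ref{regular-extension}(i)--(ii) to the inclusion $i\colon X\to X^\rho$ into the Riesz completion to produce a Riesz seminorm $\rho$ on $X^\rho$ satisfying $\rho\circ i=\|\cdot\|$. By Lemma \ref{lem:char_perp}, it suffices to show $\rho(|i(Ax)|\wedge|i(y)|)=0$.

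Because $(T(t)x-x)/t\to Ax$ in $\|\cdot\|$ and lattice operations are $\rho$-continuous (Riesz seminorm property),
\[\bigl|i\bigl((T(t)x-x)/t\bigr)\bigr|\wedge|i(y)|\ \xrightarrow{\rho}\ |i(Ax)|\wedge|i(y)|\quad\text{as }t\to 0^+.\]
The scalar-absorption identity $\alpha(a\wedge b)=\alpha a\wedge\alpha b$ for $\alpha\ge 0$ gives
\[\bigl|i\bigl((T(t)x-x)/t\bigr)\bigr|\wedge|i(y)|=(1/t)\bigl(|i(T(t)x-x)|\wedge t|i(y)|\bigr),\]
so the goal reduces to establishing $\rho\bigl(|i(T(t)x-x)|\wedge t|i(y)|\bigr)=o(t)$ as $t\to 0^+$.

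For this I run a short chain of vector-lattice inequalities. First, $|i(T(t)x-x)|\le|i(T(t)x)|+|i(x)|$ together with $(a+b)\wedge c\le a\wedge c+b\wedge c$ and $|i(x)|\wedge t|i(y)|=0$ (which holds because $x\perp y$) yields $|i(T(t)x-x)|\wedge t|i(y)|\le|i(T(t)x)|\wedge t|i(y)|$. Then $|i(y)|\le|i(T(t)y)|+|i(y-T(t)y)|$, subadditivity of $\wedge$ once more, and $|i(T(t)x)|\wedge t|i(T(t)y)|=0$ (from disjointness preservation $T(t)x\perp T(t)y$) give
\[|i(T(t)x)|\wedge t|i(y)|\le t\,|i(y-T(t)y)|.\]
Applying $\rho$ and using $\rho(|i(v)|)=\|v\|$ for $v\in X$, I obtain $\rho\bigl(|i(T(t)x-x)|\wedge t|i(y)|\bigr)\le t\,\|T(t)y-y\|$, which is $o(t)$ by strong continuity of the semigroup. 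The main obstacle is precisely this lattice chain: the disjointness furnished by $T$ at time $t$ involves $T(t)y$ rather than the fixed $y$, so I must swap $T(t)y$ for $y$ and absorb the resulting strong-continuity error; the delicate point is that this error carries exactly the extra factor $t$ that the reduction can afford.
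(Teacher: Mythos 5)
Your proposal is correct and follows essentially the same route as the paper's proof: reduce to showing $\rho(|i(Ax)|\wedge|i(y)|)=0$ in the Riesz completion via Lemma \ref{lem:char_perp}, run the same lattice chain (subadditivity of $\wedge$ over positive elements, $|i(x)|\wedge|i(y)|=0$ from $x\perp y$, and $|i(T(t)x)|\wedge|i(T(t)y)|=0$ from disjointness preservation) to bound the difference quotient term by $\|T(t)y-y\|$, and pass to the limit using $\rho$-continuity of the lattice operations. Your bookkeeping with the factor $t$ multiplied onto $|i(y)|$ rather than divided out of $|i(T(t)x-x)|$ is only a cosmetic rescaling of the paper's argument.
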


\begin{proof}
Let $(X^\rho,i)$ be the Riesz completion of $X$. 
Because of Lemma \ref{regular-extension}, the semimonotone norm $\left\|\cdot\right\|$ is equivalent to the regular norm $\rho\circ i$, where the regular norm $\rho$ on $Y=X^\rho$ is given by \eqref{norm-ext}. As $Y$ is a vector lattice, $\rho$ is a Riesz norm and \eqref{norm-ext} comes down to
\[\rho(y)=\inf\left\{\|x\|;\, x\in X, \,|y|\le i(x)\right\},\ y\in Y.\]
Let $x\in \mathcal{D}(A)$ and $y\in X$ be such that $x\perp y$. Then $i(x)\perp i(y)$ in $Y$. 
Now the line of reasoning is as in the proof of \cite[Proposition 5.4]{Are1986C}. For every $t>0$ one has  
\begin{eqnarray*}
\left|\textstyle\frac{1}{t} i(T(t)x-x)\right|\wedge|i(y)|&\le&
\textstyle\frac{1}{t} \left|i(T(t)x)\right|\wedge|i(y)|+
\textstyle\frac{1}{t} \left|i(x)\right|\wedge|i(y)|\\
&=& \textstyle\frac{1}{t} \left|i(T(t)x)\right|\wedge|i(T(t)y-y)-i(T(t)y)|\\
&\le&\textstyle\frac{1}{t} \left|i(T(t)x)\right|\wedge|i(T(t)y-y)|\\
&\le& |i(T(t)y-y)|.
\end{eqnarray*}
Here we used that $Y$ is distributive and $T$ is disjointness preserving.
We conclude \[\rho\left(\left|\textstyle\frac{1}{t} i(T(t)x-x)\right|\wedge|i(y)|\right)\le \rho(|i(T(t)y-y)|).\]
For $t\downarrow 0$ one has $T(t)y-y\to 0$ in $X$, hence
$\rho(|i(T(t)y-y)|)\to 0$, which implies 
\[\rho\left(\left|\textstyle\frac{1}{t} i(T(t)x-x)\right|\wedge|i(y)|\right)\to 0.\] Further, 
\begin{eqnarray*}& &
\left|\rho\left(\left| i(Ax)\right|\wedge|i(y)|\right)-\rho\left(\left|\textstyle\frac{1}{t} i(T(t)x-x)\right|\wedge|i(y)|\right)\right|\\
&\le&
\rho\left(\left|\left| i(Ax)\right|\wedge|i(y)|-\left|\textstyle\frac{1}{t} i(T(t)x-x)\right|\wedge|i(y)|\right|\right)\\
&\le& \rho \left( \left| \left| i(Ax)\right| -\left| i\left( \textstyle\frac{1}{t} (T(t)x-x) \right) \right| \right| \right)\\
&\le& \rho \left( \left| i\left(Ax -  \textstyle\frac{1}{t} (T(t)x-x) \right)  \right| \right)\to 0,
 \end{eqnarray*}
for $t\downarrow 0$, since $\|Ax -  \textstyle\frac{1}{t} (T(t)x-x)\|\to 0$. Therefore $\rho\left(\left| i(Ax)\right|\wedge|i(y)|\right)=0$. Now Lemma \ref{lem:char_perp} implies that $Ax\perp y$, hence $A$ is local. 
\end{proof}

Notice that the converse of the statement of \ref{the:op_sg_local} is not true in general, not even in Banach lattices. We illustrate this by an example from \cite{NagSch1986, EngNag2000}.

\begin{example}\label{ndp}  
Let $A$ be the second derivative operator given in Remark \ref{ambig_local}(I).
We have already shown that $A$ is local. The one-dimensional diffusion semigroup generated by $A$ is given
by 
\[T(t)f(x)=\int_0^1K_t(x,y)f(x)dy,\]
with kernel
\[K_t(x,y)=1+2\sum_{n=1}^{\infty}\exp(-\pi^2n^2t)\cos (\pi nx)\cdot\cos (\pi ny),\]
see \cite[2.7]{NagSch1986} or \cite[2.12]{EngNag2000}. There it is also shown that
$K_t(\cdot,\cdot)$ is a positive, continuous function on $[0,1]^2$.
Obviously, for $t\in (0,\infty)$, $T(t)$  is not disjointness preserving on $\mathrm{C}[0,1]$.
\end{example}

It is nevertheless interesting to investigate a converse of Theorem \ref{the:op_sg_local}. We consider two cases in which a $\mathrm{C}_0$-semigroup with a local generator will be disjointness preserving. The first one considers as extra condition that the generator is a bounded operator and uses the Taylor series for the semigroup. The second case assumes that also the resolvent operators are local and uses the Yosida approximations. It turns out that in both cases the semigroup even consists of local operators.

We begin with the case of a bounded generator.

\begin{theorem}\label{bddlocalgenerator}
Let $(X,K,\left\|\cdot\right\|)$ be an ordered Banach space with a semimonotone norm. If $A\in\mathcal{L}(X)$ is local, then $e^{tA}$ is local for every $t\in \mathbb{R}$.
\end{theorem}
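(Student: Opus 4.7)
The plan is to expand $e^{tA}$ as a norm-convergent Taylor series and use that the class of local operators on $X$ is closed both under the relevant algebraic operations and under pointwise norm limits of partial sums. The algebraic closure is given by Lemma \ref{sum-local}, and the passage to the limit will use that bands in $X$ are closed, which under a semimonotone norm is provided by Lemma \ref{clo-band}(ii).

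First I would check by induction, using Lemma \ref{sum-local}(ii), that $A^n \colon X \to X$ is local for every $n \in \mathbb{N}_0$ (with $A^0 = I$ trivially local). Combining this with Lemma \ref{sum-local}(i), each partial sum
\[
S_N(t) := \sum_{n=0}^{N} \frac{t^n}{n!} A^n
\]
is a local operator on $X$. Since $A$ is bounded, the series $\sum_{n=0}^\infty \frac{t^n}{n!}A^n$ converges in operator norm to $e^{tA}$, so for every $x \in X$ we have $S_N(t)x \to e^{tA}x$ in the norm of $X$ as $N \to \infty$.

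To finish, let $x, y \in X$ with $x \perp y$. Locality of each $S_N(t)$ gives $S_N(t)x \perp y$, i.e.\ $S_N(t)x \in \{y\}^{\mathrm{d}}$ for every $N$. The disjoint complement $\{y\}^{\mathrm{d}}$ is a band in $X$, and since the norm on $X$ is semimonotone (and $K$ is closed and generating), Lemma \ref{clo-band}(ii) implies that $\{y\}^{\mathrm{d}}$ is norm-closed. Hence $e^{tA}x = \lim_{N \to \infty} S_N(t) x \in \{y\}^{\mathrm{d}}$, so $e^{tA}x \perp y$. This shows that $e^{tA}$ is local for every $t \in \mathbb{R}$.

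I expect no genuine obstacle: the only nontrivial ingredient is the closedness of bands under a semimonotone norm, but this is exactly the content of Lemma \ref{clo-band} and is the essential use of the semimonotone norm hypothesis; in the Banach lattice setting this closedness is automatic, whereas here the extension to a regular norm on the vector lattice cover does the work behind the scenes.
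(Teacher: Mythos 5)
Your proposal is correct and follows essentially the same route as the paper's own proof: locality of the partial sums $\sum_{n=0}^N \frac{t^n}{n!}A^n x$ via Lemma \ref{sum-local}, followed by passing to the limit using the norm-closedness of the band $\{y\}^{\mathrm{d}}$ from Lemma \ref{clo-band}. Your write-up is in fact slightly more explicit than the paper's (spelling out the induction for $A^n$ and the role of the semimonotone norm), but there is no substantive difference.
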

\begin{proof}
Let $x, y\in X$ be such that $x\perp y$. Let $t\in\mathbb{R}$. For every $N\in\mathbb{N}$, Lemma \ref{sum-local} yields that $\sum_{k=0}^N \frac{t^n}{n!} A^nx\perp y$. According to Lemma \ref{clo-band}, $\{y\}^\mathrm{d}$ is closed, so that $e^{tA}x=\sum_{k=0}^\infty \frac{t^n}{n!} A^nx\perp y$. Hence $e^{tA}$ is local.
\end{proof}

We proceed by investigating unbounded local generators. Typical examples of local operators are differential operators and multiplication operators. Recall that Example \ref{ndp} shows an example of a differential operator that generates a $\mathrm{C}_0$-semigroup that is not disjointness preserving. The next example presents a differential operator that generates a $\mathrm{C}_0$-semigroup which is disjointess preserving. It also presents a multiplication operator as generator.

\begin{example}\label{loc-exa}
(i) Translation Semigroup. We consider the Banach space $X:=\mathrm{C}_{\mathrm{ub}}(\mathbb{R})$ of all uniformly continuous, bounded functions on $\mathbb{R}$. For $t\in [0,\infty)$, the left translation operator $T_l(t)\colon X\to X$ is given by
\[T_l(t)x(s) := x(s + t),\ s\in \mathbb{R},\ x\in X.\]
Then $T_l\colon [0,\infty)\to \mathcal{L}(X)$ is a $\mathrm{C}_0$-semigroup on $X$ with generator $A$ given by differentiation,
\[Ax:=x'\]
with domain
$\mathcal{D}(A)=\{x\in X;\,  x \mbox{ differentiable and } x'\in X\}$.
Then $A$ is local (and unbounded) and $T$ is disjointness preserving, but not  local.

(ii) Multiplication Semigroup. Let $X:=\mathrm{C}_0(\Omega)$, 
where $\Omega$ is a locally compact Hausdorff space, as defined in Remark \ref{rem:mult_opt}(b). Let $q\colon \Omega\to\mathbb{R}$ be continuous and bounded above. Define for $t\in [0,\infty)$ the operator $T_q(t)\colon X\to X$ by
\[T_q(t)x=e^{tq(t)}x,\ x\in X.\]
Then $T_q\colon [0,\infty)\to\mathcal{L}(X)$ is a $\mathrm{C}_0$-semigroup with generator $A$ given by  $Ax=qx$, $x\in \mathcal{D}(A)$ and $\mathcal{D}(A)=\{x\in X\colon\, qx\in X\}$. Then $A$ is local and $T_q(t)$ is local for every $t\in [0,\infty)$.
\end{example}

An interesting difference between the generators $A$ in Example (i) and (ii) above is that in (ii) also the inverse of $A$ is local. It turns out that a $\mathrm{C}_0$-semigroup is local whenever both $A$ and $A^{-1}$ are local, which is a special case of the following theorem.

\begin{theorem}\label{localresolvent}
Let $(X,K,\left\|\cdot\right\|)$ be an ordered Banach space 
with semimonotone norm
and let $T\colon [0,\infty)\to\mathcal{L}(X)$ be a $\mathrm{C}_0$-semigroup with generator $A$. If $A\colon X\supseteq \mathcal{D}(A) \to X$ is local and there exists a $\lambda_0\in \rho(A)\cap \mathbb{R}$ such that for every $\lambda\in\rho(A)$ with $\lambda\ge \lambda_0$ we have that $(\lambda I-A)^{-1}\colon X\to \mathcal{D}(A)\subseteq X$ is local, then $T(t)$ is local for every $t\in [0,\infty)$. 
\end{theorem}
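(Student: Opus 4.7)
The plan is to approximate $T(t)$ by semigroups whose generators are bounded local operators, using Yosida approximations. For $\lambda\in\rho(A)$ set $R(\lambda,A):=(\lambda I-A)^{-1}$ and write the Yosida approximation as $A_\lambda := \lambda A R(\lambda,A) = \lambda^2 R(\lambda,A) - \lambda I$. Standard $\mathrm{C}_0$-semigroup theory gives some $\omega\in\mathbb{R}$ with $(\omega,\infty)\subseteq\rho(A)$, and $T(t)x = \lim_{\lambda\to\infty} e^{tA_\lambda}x$ in norm for every $x\in X$ and every $t\ge 0$.

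First I would verify that $A_\lambda$ is a bounded local operator once $\lambda\ge\max\{\lambda_0,\omega\}$. By hypothesis $R(\lambda,A)$ is local, and the identity (hence any scalar multiple of it) is trivially local because $\{y\}^\mathrm{d}$ is a linear subspace for every $y\in X$. Lemma \ref{sum-local}(i) then yields that the linear combination $A_\lambda = \lambda^2 R(\lambda,A) - \lambda I$ is local, and boundedness is immediate from $R(\lambda,A)\in\mathcal{L}(X)$.

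Next, since $A_\lambda$ is bounded and local, Theorem \ref{bddlocalgenerator} implies that $e^{tA_\lambda}$ is local for every $t\in\mathbb{R}$. To transfer locality through the strong limit, fix $t\in[0,\infty)$ and pick $x,y\in X$ with $x\perp y$. Then $e^{tA_\lambda}x\in\{y\}^\mathrm{d}$ for all sufficiently large $\lambda$. Disjoint complements are bands, and by Lemma \ref{clo-band} every band is closed with respect to the semimonotone norm. Therefore $T(t)x = \lim_{\lambda\to\infty} e^{tA_\lambda}x$ also lies in $\{y\}^\mathrm{d}$, so $T(t)x\perp y$, which shows that $T(t)$ is local.

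The heart of the argument is the norm-closedness of bands, which is exactly what Lemma \ref{clo-band} furnishes and where the semimonotone (equivalently regular) nature of the norm enters essentially; this is also the main obstacle, since without it the limiting step would fail. The remaining ingredients are a routine combination of the Yosida approximation theorem with the linearity and composition properties of local operators already established in Lemma \ref{sum-local} and Theorem \ref{bddlocalgenerator}.
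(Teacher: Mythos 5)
Your proof is correct and follows essentially the same route as the paper: Yosida approximations, Theorem \ref{bddlocalgenerator} for the bounded approximants, and Lemma \ref{clo-band} (closedness of the band $\{y\}^\mathrm{d}$) to pass to the strong limit. The only, harmless, difference is that you obtain locality of $A_\lambda$ from the identity $A_\lambda=\lambda^2 R(\lambda,A)-\lambda I$ together with Lemma \ref{sum-local}(i), whereas the paper composes the local operators $A$ and $(\lambda I-A)^{-1}$ via Lemma \ref{sum-local}(ii); your variant does not even need the hypothesis that $A$ itself is local.
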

\begin{proof}
By Lemma \ref{sum-local}, the Yosida approximation $A_\lambda =A(\lambda I-A)^{-1}$ is local for every $\lambda\in \rho(A)$ with $\lambda\ge \lambda_0$. Let $t\in [0,\infty)$. Since $A_\lambda$ is bounded, due to Theorem \ref{bddlocalgenerator} we obtain that $e^{t A_\lambda}$ is local. For $x\in X$, we have $T(t)x=\lim_{\lambda\to\infty} e^{tA_\lambda}x$. We infer that $T(t)$ is local. Indeed, if $x,y\in X$ are such that $x\perp y$, then $e^{tA_\lambda}x\perp y$. Since the band $\{y\}^\mathrm{d}$ is closed by Lemma \ref{clo-band}, it follows that $T(t)x\perp y$. Thus, $T(t)$ is local. 
\end{proof}

\begin{corollary}\label{posresolvent}
Let $(X,K,\left\|\cdot\right\|)$ be an ordered Banach space 
with semimonotone norm
and let $T\colon [0,\infty)\to\mathcal{L}(X)$ be a $\mathrm{C}_0$-semigroup with generator $A$ such that the inclusion map $i\colon\mathcal{D}(A)\to X$ is a Riesz* homomorphism. If there exists a $\lambda_0\in \rho(A)\cap \mathbb{R}$ such that $\lambda_0I-A\colon\mathcal{D}(A)\to X$ is positive and local and for every $\lambda\in\rho(A)$ with $\lambda\ge \lambda_0$ we have that $(\lambda I-A)^{-1}$ is positive, then $T(t)$ is local for every $t\in [0,\infty)$. 
\end{corollary}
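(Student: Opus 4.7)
The plan is to reduce the corollary to Theorem \ref{localresolvent}. For that we must verify two things: that $A$ itself is local, and that the resolvent $(\lambda I-A)^{-1}$ is local for every $\lambda\in\rho(A)$ with $\lambda\ge\lambda_0$.

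First I would apply Proposition \ref{inv-loc} to the bijection $\lambda_0 I-A\colon \mathcal{D}(A)\to X$. By hypothesis $\lambda_0 I-A$ is positive and local, its inverse $(\lambda_0 I-A)^{-1}$ is positive, and the inclusion $\mathcal{D}(A)\to X$ is a Riesz* homomorphism, so Proposition \ref{inv-loc} yields that $(\lambda_0 I-A)^{-1}$ is local. The scalar operator $\lambda_0 I$ is trivially local, because disjoint complements are linear subspaces. Hence Lemma \ref{sum-local}(i) gives that $A=\lambda_0 I-(\lambda_0 I-A)$ is local.

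Next, fix any $\lambda\in\rho(A)$ with $\lambda\ge\lambda_0$ and write
\[\lambda I-A=(\lambda-\lambda_0)I+(\lambda_0 I-A).\]
The right-hand side is a sum of two positive operators, so $\lambda I-A\colon\mathcal{D}(A)\to X$ is positive; it is local by Lemma \ref{sum-local}(i) since both $\lambda I$ and $A$ are local; and it is bijective since $\lambda\in\rho(A)$. Its inverse is positive by assumption, and the inclusion $\mathcal{D}(\lambda I-A)=\mathcal{D}(A)\to X$ remains a Riesz* homomorphism. A second application of Proposition \ref{inv-loc} therefore gives that $(\lambda I-A)^{-1}$ is local.

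Having verified both hypotheses, Theorem \ref{localresolvent} applies and yields that $T(t)$ is local for every $t\in[0,\infty)$. No step appears to be a genuine obstacle; the main care is in checking that the domain-theoretic hypotheses of Proposition \ref{inv-loc} carry over to $\lambda I-A$, which holds because $\lambda I-A$ and $A$ share the domain $\mathcal{D}(A)$, on which the Riesz* condition on the inclusion is assumed.
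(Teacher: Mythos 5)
Your proposal is correct and follows essentially the same route as the paper: decompose $\lambda I-A=(\lambda_0 I-A)+(\lambda-\lambda_0)I$, use Lemma \ref{sum-local} for positivity and locality, apply Proposition \ref{inv-loc} to get locality of the resolvents, and conclude via Theorem \ref{localresolvent}. Your only additions are the (harmless, slightly redundant) separate treatment of $\lambda=\lambda_0$ and the explicit verification that $A$ itself is local, which the paper leaves implicit.
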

\begin{proof}
Let $\lambda\in\rho(A)$ with $\lambda\ge\lambda_0$. Then $\lambda I-A\colon\mathcal{D}(A)\to X$ is positive and bijective and, by assumption, $(\lambda I-A)^{-1}$ is positive. By Lemma \ref{sum-local}, $\lambda I-A=\lambda_0 I-A+(\lambda -\lambda_0)I$ is local. Proposition \ref{inv-loc} then yields that $(\lambda I-A)^{-1}$ is local. Hence we can apply Theorem \ref{localresolvent} and obtain that $T(t)$ is local for every $t\in [0,\infty)$.
\end{proof}

\begin{corollary}\label{loc-sg}
Let $(X,K,\left\|\cdot\right\|)$ be an ordered Banach space 
with semimonotone norm
and let $T\colon [0,\infty)\to\mathcal{L}(X)$ be a $\mathrm{C}_0$-semigroup with generator $A$ such that the inclusion map $i\colon\mathcal{D}(A)\to X$ is a Riesz* homomorphism. If $A$ is positive and local and there exists a $\lambda_0\in \rho(A)\cap \mathbb{R}$ such that $A\le \lambda_0 I$, then $T(t)$ is local for every $t\in [0,\infty)$. 
\end{corollary}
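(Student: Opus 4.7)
The plan is to verify the two hypotheses of Corollary \ref{posresolvent} and invoke it. The first hypothesis, that $\lambda_0 I - A$ is positive and local, is immediate: positivity is exactly the assumption $A \le \lambda_0 I$, and for locality the identity is trivially local (if $x \perp y$ then $Ix = x \perp y$), $A$ is local by hypothesis, and Lemma \ref{sum-local}(i) gives that $\lambda_0 I - A$ is local.

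The second hypothesis requires that for every $\lambda \in \rho(A)$ with $\lambda \ge \lambda_0$ the resolvent $(\lambda I - A)^{-1}$ is positive. Here the extra assumption $A \ge 0$ enters. Combined with $A \le \lambda_0 I$, it yields for every $y \in \mathcal{D}(A) \cap K$ and $\lambda \ge \lambda_0$ the sandwich
\[
(\lambda - \lambda_0)\, y \;\le\; (\lambda I - A)\, y \;\le\; \lambda y.
\]
From this one wants to conclude that $y = (\lambda I - A)^{-1} x$ lies in $K$ whenever $x \in K$. I would attempt this by exploiting the Riesz* homomorphism hypothesis on $i\colon \mathcal{D}(A) \to X$ (the same structural ingredient that Proposition \ref{inv-loc} relies on) together with the regular extension of the norm to the Riesz completion $X^\rho$ from Lemma \ref{regular-extension}, transporting the positivity question into the vector lattice $X^\rho$ where lattice decompositions of $y$ into positive and negative parts are available and the argument can be closed by classical means.

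The main obstacle is precisely this positivity-of-resolvent step in the pre-Riesz setting. A naive Yosida-type argument is circular here: using $A \ge 0$, the Yosida approximation $A_\mu = \mu^2 (\mu I - A)^{-1} - \mu I$ is positive exactly when $(\mu I - A)^{-1} x \ge x/\mu$ for $x \in K$, which is at least as strong as positivity of the resolvent itself. The Riesz* homomorphism structure of the domain inclusion is thus indispensable for breaking the circularity. Once positivity of the resolvents is secured, Corollary \ref{posresolvent} immediately gives that $T(t)$ is local for every $t \in [0,\infty)$.
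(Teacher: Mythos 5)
Your reduction to Corollary \ref{posresolvent} is the right frame, and your verification of the first hypothesis ($\lambda_0 I-A$ positive and local, via $A\le\lambda_0 I$ and Lemma \ref{sum-local}) matches the paper. The gap is exactly where you flag the ``main obstacle'': positivity of the resolvents is never actually established. The sandwich $(\lambda-\lambda_0)y\le(\lambda I-A)y\le\lambda y$ for $y\in\mathcal{D}(A)\cap K$ only says that $\lambda I-A$ is a \emph{positive} operator, and positivity of an invertible operator does not imply positivity of its inverse (already on $\mathbb{R}^2$ with the standard cone, $\bigl(\begin{smallmatrix}1&1\\0&1\end{smallmatrix}\bigr)$ is positive while its inverse is not). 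Passing to the Riesz completion and decomposing $y$ into positive and negative parts does not repair this: $A$ is not defined on $X^\rho$, and no ``classical means'' are identified that would close the argument. Likewise, the Riesz* homomorphism hypothesis on $i\colon\mathcal{D}(A)\to X$ is not the key to resolvent positivity; in the paper it serves only to make Proposition \ref{inv-loc} (hence Corollary \ref{posresolvent}) applicable.

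The paper's route to resolvent positivity is different and purely semigroup-theoretic: from $A\ge 0$ it concludes that $T(t)\ge 0$ for all $t\ge 0$, and then invokes the standard fact (citing Cl\'ement et al., Chapter 7) that a positive $\mathrm{C}_0$-semigroup has positive resolvents $(\lambda I-A)^{-1}$ for all sufficiently large real $\lambda$ --- in essence via the Laplace transform $(\lambda I-A)^{-1}x=\int_0^\infty e^{-\lambda t}T(t)x\,dt$ and closedness of $K$. That is the missing ingredient in your proposal: you should derive positivity of the resolvent from positivity of the \emph{semigroup}, not from an order inequality on the operator $\lambda I-A$ itself.
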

\begin{proof}
Assume that $A$ is positive. Then $T(t)$ is positive for every $t\in [0,\infty)$, so there exists $\lambda_1\in\mathbb{R}$ such that $(\lambda I-A)^{-1}$ is positive for every $\lambda\in\rho(A)$ with $\lambda\ge \lambda_1$, due to \cite[Chapter 7]{CleHeiAngDuiPag1987}. As $\lambda_0 I-A\colon \mathcal{D}(A)\to X$ is positive and local, Corollary \ref{posresolvent} yields that $T(t)$ is local for every $t\in [0,\infty)$.
\end{proof}

Merely as illustration, we present an example of an ordered Banach space satisfying the conditions of Corollary \ref{posresolvent}, on which there exists a non-trivial multiplication operator $A$ which generates a $\mathrm{C}_0$-semigroup.

\begin{example}
Consider the locally compact Hausdorff space $[0,1)$ and 
\[X=\left\{x\in\mathrm{C}_0([0,1));\, x|_{[0,\frac{1}{2}]}\in \mbox{Pol}^2[0,{\textstyle\frac{1}{2}}]\right\}, \]
where $\mathrm{C}_0([0,1))$ is defined in Remark \ref{rem:mult_opt}(b) and $\mbox{Pol}^2[a, b]$ is the space of  polynomial functions of at most degree 2 on $[a, b]$. As in the proof of \cite[Example 3.5]{KalGaa2006}, it can be verified that $X$ is order dense in $\mathrm{C}_0([0, 1))$. Thus, $X$ is a pre-Riesz space and the embedding map $i\colon X\to \mathrm{C}_0([0, 1))$ is a Riesz* homomorphism. 

Moreover, $X$
is a closed subspace of $(\mathrm{C}_0([0,1)), \left\|\cdot\right\|_\infty)$. 
Indeed, let $(x_n)$ be a sequence in $X$ and $x\in\mathrm{C}_0([0, 1))$ be such that 
$\left\|x_n-x\right\|_\infty\rightarrow0$. 
Then $(x_n |_{[0,\frac{1}{2}]})$ is a sequence in $\mbox{Pol}^2[0,\frac{1}{2}]$ and 
\[\left\|x_n |_{[0,\frac{1}{2}]}-x|_{[0,\frac{1}{2}]}\right\|\le \left\|x_n-x\right\|_\infty\rightarrow0.\]
Since $(\mbox{Pol}^2[0,\frac{1}{2}], \left\|\cdot\right\|_\infty)$ is finite dimensional, it is closed in $(\mathrm{C}[0, \frac{1}{2}], \left\|\cdot\right\|_\infty)$. Thus, it follows that $x|_{[0,\textstyle\frac{1}{2}]}\in \mbox{Pol}^2[0,\frac{1}{2}]$ and hence $x\in X$.

Since the cone in $X$ is closed and generating, $X$ is an ordered Banach space.

Let $q\in C([0,1))$ be bounded above and constant on $[0, \frac{1}{2}]$. As in Example \ref{loc-exa} (ii), 
 let $Ax=qx$ for  $x\in \mathcal{D}(A)=\{x\in X;\, s\mapsto q(s)x(s)\in X\}$. The elements of $X$ that vanish on an interval $[1-\varepsilon,1)$ for some $\varepsilon>0$ are norm dense in $X$ and they are in $\mathcal{D}(A)$, hence $\mathcal{D}(A)$ is norm dense in $X$. Also, $A$ is closed. Indeed, let $x_n\in \mathcal{D}(A)$ and $x,y\in X$ be such that $\left\|x_n-x\right\|_\infty\rightarrow0$ and $\left\|Ax_n-y\right\|_\infty\rightarrow0$. 
 Then for every $t\in [0,1)$ we have that $x_n(t)\to x(t)$, hence $q(t)x_n(t)\to q(t)x(t)$, and $(qx_n)(t)\to y(t)$, and therefore $q(t)x(t)=y(t)$. Hence $x\in\mathcal{D}(A)$ and $Ax=y$. 

Next we show that $A\colon X\supseteq\mathcal{D}(A)\to X$ is local. Let $x\in\mathcal{D}(A)$ and $y\in X$ be such that $x\perp y$. Since $X$ is order dense in $\mathrm{C}_0([0,1))$, it follows that $x\perp y$ in $\mathrm{C}_0([0,1))$. Therefore, for every $t\in [0,1)$ we have $x(t)=0$ or $y(t)=0$, hence $q(t)x(t)=0$ or $y(t)=0$, which yields that $Ax\perp y$ in $\mathrm{C}_0([0,1))$ and hence in $X$.

Let $\lambda_0\in [0,\infty)$ with $\lambda_0> \sup_s q(s)$. Then we have that $\lambda_0 I-A$ is positive and local and for every $\lambda\ge \lambda_0$ we have that $(\lambda I-A)^{-1}$ is positive. Now $A$ satisfies all conditions of Corollary \ref{posresolvent}, provided $A$ generates a $\mathrm{C}_0$-semigroup $T$. This is in fact the case, namely $T$ is given by  $(T(t)x)(s)=e^{q(s)t}x(s)$, $s\in [0,1]$, $t\in [0,\infty)$.  Clearly, $T(t)$ is local for every $t\in [0,\infty)$.
\end{example}

The conditions in our analysis that the norm is semimonotone and the space is norm complete appear fairly weak, but exclude in fact many interesting examples such as $\mathrm{C}^k$-spaces and Sobolev spaces. A general theory of disjointness preserving $\mathrm{C}_0$-semigroups on such spaces will be an interesting topic of further research.

\begin{acknowledgement*}
Feng Zhang is supported by a PhD scholarship of the China Scholarship Council. 
\end{acknowledgement*}

\bibliographystyle{plain}
\bibliography{Pre-Riesz_Sp_1_Lit}

\end{document}